\newtheorem{theorem}{Theorem}[section]
\newtheorem{lemma}[theorem]{Lemma}
\newtheorem{corollary}[theorem]{Corollary}
\theoremstyle{definition}
\theoremstyle{question}
\theoremstyle{example}
\newtheorem{example}[theorem]{Example}
\theoremstyle{remark}
\numberwithin{equation}{section}
\begin{document}

\title[$1/\kappa$-homogeneous long solenoids]{$1/\kappa$-homogeneous long solenoids}

\author{Jan P. Boro\'nski}
\address[J. P. Boro\'nski]{Institute for Research and Applications of Fuzzy Modeling, National Supercomputing Center IT4Innovations, Division of the University of Ostrava, 30. dubna 22, 70103 Ostrava,
Czech Republic -- and -- Faculty of Applied Mathematics,
AGH University of Science and Technology,
al. Mickiewicza 30,
30-059 Krak\'ow,
Poland}
\email{jan.boronski@osu.cz}

\author{Gary Gruenhage}
\address[Gary Gruenhage]{Department of Mathematics and Statistics, Auburn University, AL 36849, USA}
\email{garyg@auburn.edu}

\author{George Kozlowski}
\address[George Kozlowski]{Department of Mathematics and Statistics, Auburn University, AL 36849, USA}
\email{kozloga@auburn.edu}

\subjclass[2000]{54F15; 54F20}

\keywords{$\frac{1}{\kappa}$-homogeneous, circle-like, indecomposable continuum, long line, nonmetric solenoids, shape}

\begin{abstract}
We study nonmetric analogues of Vietoris solenoids.   Let $\Lambda$ be an ordered continuum, and let $\vec{p}=\langle p_1,p_2,\dots\rangle$ be a sequence of positive integers.  We define a natural inverse limit space $S(\Lambda,\vec{p})$, where the first factor space is the nonmetric ``circle" obtained by identifying the endpoints of $\Lambda$, and the $n$th  factor space, $n>1$, consists of  $p_1p_2\cdot\dots \cdot p_{n-1}$ copies of $\Lambda$ laid end to end in a circle. We prove that for every cardinal $\kappa\geq 1$, there is an ordered continuum $\Lambda$ such that $S(\Lambda,\vec{p})$ is $\frac{1}{\kappa}$-homogeneous; for $\kappa>1$, $\Lambda$ is built from copies of the long line. Our example with $\kappa=2$ provides a nonmetric answer to a question of Neumann-Lara, Pellicer-Covarrubias and Puga from 2005, and with $\kappa=1$ provides an example of a nonmetric homogeneous circle-like indecomposable continuum.  We also show that for each uncountable cardinal $\kappa$ and for each fixed $\vec{p}$, there are $2^\kappa$-many $\frac{1}{\kappa}$-homogeneous solenoids of the form $S(\Lambda,\vec{p})$ as $\Lambda$ varies over ordered continua of weight $\kappa$.  Finally, we show that for every ordered continuum $\Lambda$  the shape of $S(\Lambda,\vec{p})$
depends only on the equivalence class of $\vec{p}$ for a relation similar to one used to classify the additive subgroups of $\mathbb{Q}$.  Consequently,  for each fixed $\Lambda$, as $\vec{p}$ varies, there are exactly
$\mathfrak{c}$-many different shapes, where $\mathfrak{c}=2^{\aleph_0}$,  (and there are also exactly that many homeomorphism types) represented by $S(\Lambda,\vec{p})$.

\end{abstract}

\maketitle

\section{Introduction}

The present paper is concerned with nonmetric analogues of Vietoris solenoids. Recall that a 1-dimensional (Vietoris) \textit{solenoid} is a compact and connected space (i.e. \textit{continuum}) given as the inverse limit of circles, with $n$-fold covering maps as bonding maps. (The circle is a trivial example of a solenoid with $n=1$.) Solenoids were first considered by Vietoris \cite{Vi} for $n=2$,  defined as what would now be called the mapping torus of a homeomorphism of the Cantor set.
 Van Dantzig \cite{vD}
defined and extensively studied $n$-adic solenoids for all integers 
$n$ greater than 1.  He gave two constructions, one via the mapping torus and another by using a nested sequence of solid tori in $\mathbb{R}^{3}$, and mentioned in passing that the latter construction could also be used to construct spaces for arbitrary products $b_{\nu}=\prod_{i=1}^{\nu}n_{i}$ of integers 
$n$ greater than 1 rather than simply powers $n^{\nu}$.  See \cite{H-R} for references to van Dantzig's further work and an extensive discussion of  solenoidal groups and $\mathbf{a}$-adic solenoids, where $\mathbf{a}$ is a sequence of integers 
greater than 1.
When Steenrod \cite{Stnrd} used a 2-adic solenoid as the inverse limit of circles in an example, he credited Vietoris for having defined the same space, which suggests that this modern representation had already passed into folklore.
McCord \cite{Mc} introduced higher dimensional analogues of Vietoris solenoids, where the factor spaces in the inverse limit are connected, locally pathwise connected, and semi-locally simply connected spaces, and each bonding map is a regular covering map. Smale \cite{SS}
 used the construction of solenoids in terms of a descending sequence of solid tori, to show that they can arise as hyperbolic attractors in smooth dynamical systems. His results were extended to higher dimensions by R.F. Williams \cite{Wi}. Solenoids have been a major theme of research in algebraic and general topology, topological algebra, dynamical systems and the theory of foliations, as part of a wider class of \textit{matchbox manifolds} (see e.g. \cite{ClHu} for recent results).

Each Vietoris solenoid $S$ is homogeneous and circle-like, and if $S$ is not the circle then it is also indecomposable. Recall that  a space $S$ is \textit{homogeneous} if for any two points $x$ and $y$ in $S$ there is a homeomorphism $h:S\to S$ such that $h(x)=y$. A continuum is \textit{indecomposable} if it is not the union of two proper subcontinua, and it is \textit{hereditarily indecomposable} if all subcontinua are indecomposable. Bing \cite{Bi60} gave an equivalent form of the following definition for metric continua: $S$ is \textit{circle-like} if for each open cover $\mathcal{U}$ of $S$ there is a finite open refinement $\{U_1,\ldots,U_t\}$ that forms a \textit{circular chain}; i.e. $U_i\cap U_j\neq\emptyset $ if and only if $|i-j|\leq 1 (\operatorname{mod} t)$. Hagopian and Rogers \cite{HaRo} classified homogeneuous circle-like metric continua. According to their classification the following are the only such continua: pseudo-arc \cite{Bi2}, Vietoris solenoids, and solenoids of pseudo-arcs \cite{Bi3}, \cite{Ro}. The pseudo-arc was constructed by Moise in 1948, as an example of a continuum homeomorphic to each of its nondegenerate subcontinua, but distinct from the arc \cite{Mo}. Moise also gave a short alternative proof of Bing's result that the pseudo-arc is homogeneous \cite{Mo2}. Later Bing showed that all hereditarily indecomposable arc-like continua are homeomorphic \cite{Bi}, and therefore Moise's example is homeomorphic to a space first described by Knaster in 1922 \cite{Kn}. Bing also gave another characterization of the pseudo-arc, as a homogeneous arc-like continuum \cite{Bih}. A related, important circle-like metric continuum is Bing's pseudo-circle \cite{Bi}. Fearnley \cite{Fa1} and Rogers \cite{Ro2} independently showed that the pseudo-circle is not homogeneous (see also \cite{KuGa} for a short proof). Later, Kennedy and Rogers \cite{KeRo} proved that the pseudo-circle has uncountably many orbits under the action of its homeomorphism group. Sturm showed that the pseudo-circle is not homogeneous with respect to continuous surjections \cite{FS}. The following question arises naturally.

{\bfseries Question.} {\itshape What degrees of homogeneity can be realized on circle-like continua?}

To make the above question more precise, given a continuum $Y$ let $\operatorname{Homeo}(Y)$ denote its homeomorphism group. $Y$ is \textit{$\frac{1}{\kappa}$-homogeneous} if the action of $\operatorname{Homeo}(Y)$ on $Y$ has exactly $\kappa$ orbits, where $\kappa$ is a cardinal number. So homogeneous spaces are $\frac{1}{1}$-homogeneous and the smaller the $\kappa$ is, the more homogeneous $Y$ is. Consequently, the pseudo-circle is $\frac{1}{\kappa}$-homogeneous, for an uncountable cardinal $\kappa$. It is an open question as to whether $\kappa$ is equal to the cardinality of the real numbers. The last few years provided examples of $\frac{1}{\kappa}$-homogeneous circle-like metric continua for the case when $\kappa$ is a natural number. Neumann-Lara, Pellicer-Covarrubias and Puga \cite{NePP} gave an example of a decomposable circle-like $\frac{1}{2}$-homogeneous continuum. They asked (Question 4.10 \cite{NePP}) if there exists an indecomposable $\frac{1}{2}$-homogeneous circle-like continuum. Such an example (in fact a class of examples) was constructed by Pyrih and Vejnar \cite{PyVe}, and independently by the first author \cite{Bo}. Jim\'enez-Hern\'andez, Minc and Pellicer-Covarrubias \cite{RiPP} constructed a family of $\frac{1}{n}$-homogeneous solenoidal continua for every integer $n>2$. Topologically inequivalent examples were also given by the first author \cite{Bo2}, and a $\frac{1}{\omega}$-homogeneous solenoidal continuum was constructed there as well. A hereditarily decomposable, planar, $\frac{1}{2^{\omega}}$-homogeneous circle-like continuum is obtained by the identification of two endpoints in a singular arc-like continuum given by Ma\'ckowiak \cite{Ma}\footnote{We thank an anonymous referee for bringing this fact to our attention.}. In fact, Ma\'ckowiak's example is even more nonhomogeneous, in the sense that every orbit of the homeomorphism group consists of a single point. 

In this note we provide a nonmetric positive answer to the question from \cite{NePP}, and then we go on to construct a $\frac{1}{\kappa}$-homogeneous example with the same properties for any cardinal $\kappa$, finite or infinite. For $\kappa=1$, the construction yields a nonmetric homogeneous circle-like indecomposable  continuum. We denote our examples $S(\Lambda,\vec{p})$, as each depends on an ordered continuum $\Lambda$ and a sequence $\vec{p}$ of positive integers.  We also show that for each uncountable cardinal $\kappa$ and for each fixed $\vec{p}$, there are $2^\kappa$-many $\frac{1}{\kappa}$-homogeneous solenoids of the form $S(\Lambda,\vec{p})$ as $\Lambda$ varies over ordered continua of weight $\kappa$.    Finally, we show that for every ordered continuum $\Lambda$  the shape of $S(\Lambda,\vec{p})$
depends only on the equivalence class of $\vec{p}$ for a relation similar to one used to classify the additive subgroups of $\mathbb{Q}$.  Consequently,  for each fixed $\Lambda$, as $\vec{p}$ varies, there are exactly
$\mathfrak{c}$-many different shapes, where $\mathfrak{c}=2^{\omega}$,  (and there are also exactly that many homeomorphism types) represented by $S(\Lambda,\vec{p})$.

Our notation for ordinals, cardinals,  and ordinal arithmetic follows \cite{kunen}.   So, e.g., $\omega$ is the least infinite ordinal and also denotes the least infinite cardinal, $\omega_1$ is the least uncountable ordinal and cardinal, etc.  We use $\kappa$ and $\lambda$ to denote cardinals that may be infinite, while $i,j,k,l,m$, and $n$ denote finite integers (possibly negative).  Also, an ordinal is the set of its predecessors,
e.g., $\omega=\{n:n<\omega\}=\{n:n\in \omega\}$ is the set of natural numbers, and $\omega_1$ is the set of countable ordinals.  The set of positive integers is denoted by $\mathbb N$.

\section{The finite case}

An \emph{ordered continuum} is a nondegenerate compact connected space $\Lambda$ which has a linear order and whose topology is the order topology.  If $a, b \in \Lambda$, then $[a, \, b]$ is the set of all $x \in \Lambda$ such that $a \le x \le b$, and  $\partial \Lambda$ is the set consisting of the first and last points of $\Lambda$.  If $\Lambda_{1}$ and $\Lambda_{2}$ are ordered continua, let $\Lambda_{1} \vee \Lambda_{2}$ be the ordered continuum obtained from the disjoint union of $\Lambda_{1}$ and $\Lambda_{2}$ by identifying the last point of $\Lambda_{1}$ with the first point of $\Lambda_{2}$ preserving the given orders in  $\Lambda_{1}$ and $\Lambda_{2}$.

All of our examples have the following form.   Take an ordered continuum  $\Lambda$, and let $\Sigma$ be the ``circle" $\Lambda/\partial \Lambda$.
More generally, for $n\geq 1$, let $\Lambda^{(n)} =\Lambda_1\vee \Lambda_2\vee\dots \vee\Lambda_n$ where $\Lambda_i=\Lambda$ for all $1\leq i\leq n$, and let  $\Sigma^{(n)}=\Lambda^{(n)}/\partial \Lambda^{(n)}$.

For convenience, let the identified  endpoints of the copies of $\Lambda$ in  $\Sigma^{(n)}$ be labeled
$\infty_0,\infty_1,..., \infty_{n-1}$.   Also, for $x\in \Lambda\setminus\partial\Lambda$, let $\infty_i+x$ denote its copy in the $i^{th}$ copy of $\Lambda$ as a subset of $\Sigma^{(n)}$.

 If $m$ and $n$ are positive integers, there is a natural mapping $\phi^m_{n}:\Sigma^{(mn)}\rightarrow \Sigma^{(n)}$ defined by $\phi^m_n(\infty_i)=\infty_j$ and $\phi^m_n(\infty_i+x)=\infty_j+x$, where
$x\in \Lambda\setminus\partial\Lambda$ and $j=i$ mod $n$.   Note that $\phi_n^m$ is an $m$-fold covering map, precisely $$(\phi_n^m)^{-1}(\infty_j+x)=\{\infty_{j+kn}+x: k=0,1,..,m-1\},$$
and the same formula without $x$ holds as well.

Let $\vec{p}=\langle p_1,p_2,\dots\rangle$ be a sequence of integers greater than 1.
Let $$S(\Lambda,\vec{p})=\varprojlim \{\phi^{p_n}_{k(n)},\Sigma^{(k(n))},n\in\mathbb{N}\},$$
where $k(1)=1$ and $k(n)=p_1p_2...p_{n-1}$ for $n>1$.

\begin{theorem}\label{indecomposable}  For each $\vec{p}$,  $S(\Lambda,\vec{p})$ is an indecomposable circle-like continuum.  \end{theorem}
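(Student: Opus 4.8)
The plan is to verify the three properties in turn, writing $\pi_n\colon S(\Lambda,\vec{p})\to\Sigma^{(k(n))}$ for the projection of the inverse limit onto its $n$th factor and, for $m>N$, $\Phi_{N,m}\colon\Sigma^{(k(m))}\to\Sigma^{(k(N))}$ for the composite bonding map $\phi^{p_N}_{k(N)}\circ\cdots\circ\phi^{p_{m-1}}_{k(m-1)}$, a covering map of degree $k(m)/k(N)=p_Np_{N+1}\cdots p_{m-1}$. That $S(\Lambda,\vec{p})$ is a continuum is the easiest point: each $\Sigma^{(n)}$ is a quotient of finitely many copies of the compact connected space $\Lambda$, hence compact connected Hausdorff, and each $\phi^{p_n}_{k(n)}$ is surjective, so the inverse limit over $\mathbb{N}$ is a nonempty compact connected Hausdorff space by the standard inverse-limit facts. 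For circle-likeness I would first note that each factor $\Sigma^{(n)}$ is circle-like: concatenating $n$ copies of $\Lambda$ yields a generalized arc (a compact connected LOTS), generalized arcs are chainable, and identifying the two endpoints turns a chain into a circular chain, so every open cover of $\Sigma^{(n)}$ is refined by a finite open circular chain. I would then use that, by compactness and the form of the inverse-limit topology, any open cover $\mathcal{U}$ of $S(\Lambda,\vec{p})$ is refined by $\{\pi_N^{-1}(W):W\in\mathcal{W}\}$ for some level $N$ and finite open cover $\mathcal{W}$ of $\Sigma^{(k(N))}$. Refining $\mathcal{W}$ by a circular chain $\{C_1,\dots,C_t\}$ and pulling back, the sets $\pi_N^{-1}(C_i)$ form a finite open circular chain refining $\mathcal{U}$: as $\pi_N$ is surjective, $\pi_N^{-1}(C_i)\cap\pi_N^{-1}(C_j)=\pi_N^{-1}(C_i\cap C_j)$ is nonempty exactly when $C_i\cap C_j$ is, so the circular intersection pattern is preserved.

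The heart of the matter is indecomposability, which I would establish by showing that every proper subcontinuum $K$ is nowhere dense. The first step is a lemma: a closed set $K$ with $\pi_n(K)=\Sigma^{(k(n))}$ for every $n$ must equal $S(\Lambda,\vec{p})$. Indeed, given $x$ and $n$, surjectivity at level $n$ yields $k_n\in K$ with $\pi_n(k_n)=\pi_n(x)$; compatibility of the bonding maps forces $\pi_j(k_n)=\pi_j(x)$ for all $j\le n$, so $k_n\to x$ and $x\in\overline{K}=K$. Contrapositively, a proper subcontinuum $K$ has $\pi_N(K)\subsetneq\Sigma^{(k(N))}$ for some $N$, so fixing $q_N\notin\pi_N(K)$, the arc $J_N:=\Sigma^{(k(N))}\setminus\{q_N\}$ contains $\pi_N(K)$.

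Now suppose, for contradiction, that $\Int K\neq\emptyset$, so $\pi_M^{-1}(W)\subseteq K$ for some $M$ and nonempty open $W\subseteq\Sigma^{(k(M))}$. Passing to $L=\max(M,N)$ and replacing $W$ by $\Phi_{M,L}^{-1}(W)$ if $L>M$, I may assume $\pi_L^{-1}(W)\subseteq K$ with $W$ nonempty open; moreover $\pi_L(K)$ is a proper subcontinuum of $\Sigma^{(k(L))}$, since it maps onto $\pi_N(K)\subsetneq\Sigma^{(k(N))}$, so it lies in an arc $J_L:=\Sigma^{(k(L))}\setminus\{q_L\}$. Applying $\pi_L$ gives $W\subseteq\pi_L(K)\subseteq J_L$; pick $w\in W$. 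For $m>L$ set $A_m=\pi_m(K)$. On one hand $\Phi_{L,m}(A_m)=\pi_L(K)\subseteq J_L$, so $A_m\subseteq\Phi_{L,m}^{-1}(J_L)$, the disjoint union of the $d=k(m)/k(L)\geq 2$ sheets of the trivial covering over the arc $J_L$; since $A_m$ is connected it lies in a single sheet. On the other hand $\pi_L^{-1}(W)\subseteq K$ gives $\Phi_{L,m}^{-1}(W)\subseteq A_m$, so in particular the fiber $\Phi_{L,m}^{-1}(w)$, which meets every one of the $d\geq 2$ sheets, is contained in $A_m$. Thus $A_m$ meets at least two sheets, contradicting that it lies in one. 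Hence $\Int K=\emptyset$, every proper subcontinuum is nowhere dense, and $S(\Lambda,\vec{p})$ is indecomposable.

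The main obstacle is the nonmetric character of $\Lambda$: none of the familiar ``arc length shrinks geometrically'' estimates used for metric solenoids are available. The key realization that makes the argument go through is that the decisive contradiction can be phrased purely in covering-space terms—a connected set projecting into the arc $J_L$ must sit inside a single sheet of its $d$-fold cover, while a nonempty $\pi_L^{-1}(W)\subseteq K$ forces $A_m$ to meet all $d\geq2$ sheets—so no metric on $\Lambda$ or on $\Sigma^{(n)}$ is ever invoked. Two subsidiary points that must be treated with the order topology rather than a metric are the chainability of the generalized arc underlying $\Sigma^{(n)}$ and the reduction of an arbitrary open cover of the inverse limit to a finite level; both are standard but should be stated carefully in the nonmetric setting.
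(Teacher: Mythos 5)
Your proposal is correct, but its core---the indecomposability argument---takes a genuinely different route from the paper's. The paper works directly with a hypothetical decomposition $S(\Lambda,\vec{p})=C\cup G$: it chooses $n$ so that $C_n=\pi_n(C)$ and $G_n=\pi_n(G)$ are both proper subarcs of $\Sigma^{(k(n))}$, notes that $(\phi^{p_n}_{k(n)})^{-1}(C_n)$ and $(\phi^{p_n}_{k(n)})^{-1}(G_n)$ each split into $p_n$ disjoint copies so that connectedness confines $\pi_{n+1}(C)$ and $\pi_{n+1}(G)$ to one copy apiece, and concludes that $\pi_{n+1}(C)\cup\pi_{n+1}(G)\neq\Sigma^{(k(n+1))}$, contradicting surjectivity of $\pi_{n+1}$. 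You instead prove that every proper subcontinuum $K$ has empty interior: the inclusion $\pi_L^{-1}(W)\subseteq K$ forces $\pi_m(K)$ to contain a whole fiber $\Phi_{L,m}^{-1}(w)$, which meets all $d\geq 2$ sheets over the arc $J_L$, while connectedness traps $\pi_m(K)$ in a single sheet. Both arguments exploit the same covering-space mechanism (a connected set projecting into a proper subarc must lie in one sheet of its preimage), but your one-continuum formulation buys something concrete: it sidesteps the step the paper glosses over with ``Consequently,'' namely why one lift of $C_n$ together with one lift of $G_n$ cannot cover $\Sigma^{(k(n+1))}$---this itself needs a short argument (otherwise any point of $\Sigma^{(k(n))}\setminus G_n$ would have all $p_n\geq 2$ of its preimages inside the single lift of $C_n$, contradicting injectivity of the covering map on that lift, so $G_n$ would not be proper). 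The price of your route is two auxiliary facts: the lemma that a closed set whose projections are all surjective equals the whole inverse limit (which you prove), and the easy direction of the standard equivalence ``all proper subcontinua nowhere dense $\Rightarrow$ indecomposable,'' which you use without comment and should state explicitly (if $S=C\cup G$ with both proper, then $S\setminus G$ is a nonempty open subset of $C$). On circle-likeness and the continuum property you and the paper agree in substance; you merely fill in the standard refinement-pullback proof that the paper cites as well known.
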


\begin{proof}

{\scshape Claim 1.} \textit{$S(\Lambda,\vec{p})$ is circle-like.}
It is an immediate consequence of Lemma 3.8 of Chapter X of \cite{EilSt}
that an inverse limit of circle-like continua is circle-like. Since each factor space $\Sigma^{(k(n))}$ is circle-like so is $S(\Lambda,\vec{p})$.

{\scshape Claim 2.} \textit{$S(\Lambda,\vec{p})$ is indecomposable.}

By contradiction suppose that there are two proper subcontinua $C$ and $G$ of $S(\Lambda,\vec{p})$ such that $C\cup G=S(\Lambda,\vec{p})$. Let $C_n$ and $G_n$ be the projections onto $\Sigma^{(k(n))}$ of $C$ and $G$ respectively such that $\Sigma^{(k(n))}\notin\{C_n,G_n\}$. Since $C_n$ (as well as $G_n$) is a proper subcontinuum of $\Sigma^{(k(n))}$ it is an arc (perhaps nonmetric) that is a proper subset of $\Sigma^{(k(n))}$. Note that $(\phi^{p_n}_{k(n)})^{-1}(C_n)$ and $(\phi^{p_n}_{k(n)})^{-1}(G_n)$ each consists of $p_n$ disjoint homeomorphic copies of $C_n$ and $G_n$ respectively.  Since $\pi_{n+1}(C)$ is connected it follows that it misses a component of $(\phi^{p_n}_{k(n)})^{-1}(C_n)$. The same is true about $\pi_{n+1}(G)$ with respect to $(\phi^{p_n}_{k(n)})^{-1}(G_n)$. Consequently $\pi_{n+1}(C\cup G)=\pi_{n+1}(C)\cup \pi_{n+1}(G)\neq \Sigma^{(n+1)}$ contradicting surjectivity of $\pi_{n+1}$.
\end{proof}

Recall that the {\it long line} is the space obtained by putting a copy of the open unit interval $(0,1)$  in between $\alpha$ and $\alpha+1$ for each $\alpha\in\omega_1$, and giving it the natural order topology.  We will refer to this line as the ``standard" long line.  The {\it long line of length $\kappa$} can be similarly defined for any ordinal $\kappa$ (though it shouldn't be considered ``long" if $\kappa<\omega_1$).  Note that long lines are locally compact.  The {\it closed long line of length $\kappa$} adds $\kappa$ as a compactifying point.  We will often use interval notation to denote these lines and subintervals thereof; e.g., $[0,\omega_1)$ is the standard long line, and $[0,\kappa]$ is the closed long line of length $\kappa$.

Before stating our next result, let us describe some natural autohomeomorphisms of $\Sigma^{(n)}$ when constructed from the ordered continuum $\Lambda$ as previously described. Note that for each $k=0,1,...,n-1$,    the ``rotation" $R_k$
of $\Sigma^{(n)}$ that maps $\infty_i+x$ to $\infty_j+x$, where $j=i+k$ mod $n$, is an autohomeomorphism of $\Sigma^{(n)}$.  Also, given a homeomorphism $S:\Lambda\mapsto \Lambda$ which leaves the endpoints fixed,
the map $\hat{S}_n$ of $\Sigma^{(n)}$ which applies $S$  to the interior of each of the $n$ copies of $\Lambda$  and leaves the points $\infty_i$, $i=0, 1, ...,n-1$ fixed is another autohomeomorphism.
It is easily checked that these autohomeomorphisms commute with the bonding maps.

\begin{theorem}\label{onehalf}  Let $\Lambda=[0,\omega_1]$ be the standard closed long line.  Then for each sequence $\vec{p}$ of positive integers,  $S(\Lambda,\vec{p})$ is $\frac{1}{2}$-homogeneous.  \end{theorem}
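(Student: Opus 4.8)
The plan is to exhibit the two orbits explicitly and then separate them by a topological invariant. Write $S=S(\Lambda,\vec p)$, let $\pi_n\colon S\to\Sigma^{(k(n))}$ be the $n$th projection, and let $\mathcal{K}\subseteq S$ be the set of threads $\langle x_n\rangle$ each of whose coordinates is one of the identified endpoints $\infty_i$. Because $\phi^{p_n}_{k(n)}(\infty_i)=\infty_{i\bmod k(n)}$ and $\phi^{p_n}_{k(n)}(\infty_i+x)=\infty_{i\bmod k(n)}+x$ with $x$ interior, the property ``$x_n$ is an endpoint'' passes both up and down the inverse system; hence a thread lies in $\mathcal{K}$ iff one (equivalently, every) coordinate is an endpoint. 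In particular $\mathcal{K}=\varprojlim\{\mathbb{Z}/k(n)\mathbb{Z}\}$ is a Cantor set. A thread in $S\setminus\mathcal{K}$ then has all coordinates interior, and the relation $\phi^{p_n}_{k(n)}(x_{n+1})=x_n$ forces a common interior parameter $t\in\Lambda\setminus\partial\Lambda$; such a thread is thus determined by a pair $(t,\xi)$ with $\xi\in\mathcal{K}$ recording the sequence of copies through which it passes. I claim $\mathcal{K}$ and $S\setminus\mathcal{K}$ are exactly the two orbits.

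For transitivity inside each orbit I will use the autohomeomorphisms described before the theorem. Given $\eta=\langle a_n\rangle\in\mathcal{K}$, the rotations $R_{a_n}$ of the factors satisfy $\phi^{p_n}_{k(n)}\circ R_{a_{n+1}}=R_{a_n}\circ\phi^{p_n}_{k(n)}$ precisely because $a_{n+1}\equiv a_n\pmod{k(n)}$, so they assemble into a homeomorphism $T_\eta$ of $S$ fixing every interior parameter and translating the fiber coordinate by $\eta$. On $\mathcal{K}$ the family $\{T_\eta\}$ realizes the group translations of $\mathcal{K}$ and is therefore transitive, so $\mathcal{K}$ is a single orbit. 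For $S\setminus\mathcal{K}$, given $(t_x,\xi_x)$ and $(t_y,\xi_y)$ I first apply $T_{\xi_y-\xi_x}$ to reach $(t_x,\xi_y)$, and then apply $\hat h:=\varprojlim\hat S_{k(n)}$ for a homeomorphism $h\colon\Lambda\to\Lambda$ fixing $\partial\Lambda$ with $h(t_x)=t_y$; since $\hat h$ preserves the copy through which a point passes, it fixes the fiber and sends $(t_x,\xi_y)$ to $(t_y,\xi_y)$. The existence of $h$ reduces to the homogeneity of the interior of $[0,\omega_1]$ under endpoint-fixing homeomorphisms: split $[0,\omega_1]$ at $t_x$ and at $t_y$, observe that $[0,t_x]\cong[0,t_y]$ are arcs while $[t_x,\omega_1]\cong[t_y,\omega_1]$ are closed long lines, and glue the matching homeomorphisms. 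Hence $S\setminus\mathcal{K}$ is also a single orbit.

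It remains to show the two orbits are distinct, and this is the crux of the argument. I will prove $\mathcal{K}=\{x\in S:\chi(x,S)>\omega\}$, i.e. that $\mathcal{K}$ is exactly the set of non--first--countable points, a set manifestly preserved by every self-homeomorphism. If $x\in S\setminus\mathcal{K}$, each coordinate $x_n$ is an interior point of a copy of $\Lambda$ and hence first countable in $\Sigma^{(k(n))}$, since every point of $[0,\omega_1]$ other than $\omega_1$ has countable character; choosing countable neighbourhood bases at the $x_n$ and pulling them back yields a countable basis $\{\pi_n^{-1}(U)\}$ at $x$, so $\chi(x,S)=\omega$. If $x=\langle\infty_{i_n}\rangle\in\mathcal{K}$, consider the long side approaching $x$: for $\alpha\in(0,\omega_1)$ the threads $z(\alpha)=\langle\infty_{i_n-1}+\alpha\rangle$ are well defined (again by $i_{n+1}\equiv i_n\pmod{k(n)}$), and together with $x$ they form a subspace $R\subseteq S$ readily seen to be embedded as $(0,\omega_1]$ with $x$ corresponding to $\omega_1$. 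Since $\chi(\omega_1,(0,\omega_1])=\operatorname{cf}(\omega_1)=\omega_1$ and character does not decrease from a subspace to the ambient space (that is, $\chi(x,R)\le\chi(x,S)$), we get $\chi(x,S)\ge\omega_1$. Thus $\mathcal{K}$ is a topological invariant, the two orbits are genuinely different, and $S$ has exactly two orbits. The main obstacle is precisely this last separation step, where the one-sided long-ray structure at the identified endpoints must be shown to force uncountable character while every other point stays first countable.
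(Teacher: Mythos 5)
Your proof is correct and takes essentially the same route as the paper's: the same two orbits (threads with $x_1=\infty$, i.e.\ all coordinates endpoints, versus threads with interior coordinates), transitivity within each orbit via rotations composed with lifted endpoint-fixing homeomorphisms of $\Lambda$, and separation of the orbits by first countability. Your embedded copy of $[\epsilon,\omega_1]$ converging to a point of $\mathcal{K}$ simply makes explicit the step the paper only asserts, namely that points of $O_2$ are not $G_\delta$ (equivalently, not of countable character) while points of $O_1$ are points of first countability.
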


\begin{proof} Recall that  $\Sigma$ is obtained by indentifying the endpoints of $[0,\omega_1]$; let $\infty$ denote the collapsed point $\{0,\omega_1\}$.   By a reasoning similar to \cite{Bo}, we shall show that the two orbits are given by $O_1=\{\vec{s}\in S(\Lambda,\vec{p}): s_1\neq\infty\}$ and $O_2=\{\vec{s}\in S(\Lambda,\vec{p}): s_1=\infty\}$.

First suppose $\vec{s},\vec{w}\in O_1$.   Choose $\alpha<\omega_1$ larger than both $s_1$ and $w_1$.        Since $[0,\alpha]$ is a metric arc, there is an autohomeomorphism of $[0,\alpha]$ which maps $s_1$ to $w_1$ and keeps the endpoints fixed.  Clearly this autohomeomorphism extends to an autohomeomorphism $S$ of $\Lambda$ which maps $s_1$ to $w_1$.  Let $H_1=\hat{S}_1$.
For $n>1$, note that $s_n$ and $w_n$ are  points corresponding to $s_1$ and $w_1$, resp., in one of the copies of $\Lambda$ making up $\Sigma^{(k(n))}$, though they need not both lie in the same copy.   So we let $H_n$ be  $\hat{S}_{k(n)}$ followed by the appropriate rotation to take $s_n$ to $w_n$.  Since these homeomorphisms commute with the bonding maps, the sequence $H_1,H_2,...$ defines an autohomeomorphism of the inverse limit space which maps $\vec{s}$ to $\vec{w}$.

 If $\vec{s},\vec{w}\in O_2$, then for each $n$,  $s_n$ and $w_n$ are $\infty_i$ and $\infty_j$ for some $i,j<n$, whence an appropriate rotation of $\Sigma^{(k(n))}$  will send $s_n$ to $w_n$.
 So again we obtain an autohomeomorphism of the inverse limit space sending $\vec{s}$ to $\vec{w}$.

Finally, note that every point in $O_1$ is a point of first countability in $S(\Lambda,\vec{p})$, while no point of $O_2$ is a $G_\delta$.  Thus no autohomeomorphism sends a point of $O_1$ to $O_2$. It follows that these two sets are precisely the orbits.  \end{proof}

Let $\Lambda_1=[0,\omega_1]$, and let $\Lambda_1^-=\Lambda_1\setminus \{\omega_1\}$.  In other words, $\Lambda_1^-$ is the standard long line.    Now let $\Lambda_2^o=\mathbb Z\times \Lambda_1^-$ with the lexicographic order, where $\mathbb Z$ is the set of integers.
 Note that the point $(n+1,0)=l.u.b. \{(n,x):x\in \Lambda\}$ and so $(n+1,0)$ compactifies
$\{n\}\times \Lambda$.  So $\Lambda_2^o$ is a locally compact connected LOTS (linearly ordered topological space) with no first or last point. We
may think of $\Lambda_2^o$ as the real line with each open interval $(n,n+1)$, $n\in \mathbb Z$, replaced by the open long line $(0,\omega_1)$.  For convenience, we
denote the point $(n,x)\in \Lambda_2^o$  by $n+x$ and the point $n+0=(n,0)$ by $n$.  Now let $\Lambda_2=\Lambda_2^o\cup\{-\infty, \infty\}$ be the two point
compactification of $\Lambda_2^o$.

Given $\Lambda_n$, let $\Lambda_{n+1}$ be obtained from $\Lambda_n$ just like $\Lambda_2$ was obtained from $\Lambda_1$.  I.e., $\Lambda_{n+1}$ is the two point compactification of $\mathbb Z\times \Lambda_n^-$ with the lexicographic order, where $\Lambda_n^-$ is $\Lambda_n$ minus its right endpoint.  It will be helpful to consider the following translation map on $\Lambda_n$'s. For $x$ in $\Lambda_1^-$, let $i+x$ denote the point  $(i,x)$  in $\Lambda_2^o$.   Now on $\Sigma$, $T_k$ is the map which sends $i+x$ to $(i+k)+x$ and $\infty$ is fixed. On $\Sigma^{(n)}$, $T_k$ does the same on each copy of $\Lambda_2^o$  and keeps the other points (i.e., $\infty_0,...,\infty_{n-1}$) fixed. $T_k$ is defined analogously in the inductive step, where $\Lambda_{n+1}$ is the two point compactification of $Z\times \Lambda_n^-$.

We want to prove that $S(\Lambda_n,\vec{p})$ is $\frac{1}{n+1}$-homogeneous.   It will be helpful to first prove
the following lemma.  Note that 1-dimensional Vietoris solenoids (inverse limits of circles with p-fold covering maps as bonding maps) have a base that consists of open sets homeomorphic to $\mathbb C\times (0,1)$, where $\mathbb C$ is a Cantor set. It is easy to see that the nonmetric solenoids we consider will have a similar property, where $(0,1)$ is replaced by a basic open set (i.e., an arc) in $\Sigma$. For completeness sake we sketch a proof of this fact (in a more general form that has essentially the same proof).

\begin{lemma}\label{cantor} Let $S=\varprojlim \{\phi_{n},\Sigma^{(n)},n\in\mathbb{N}\}$ be an inverse limit of locally connected spaces in which the bonding maps are finite-to-one (but at least $2$-to-one) covering maps.  Suppose
also that each point $x\in \Sigma$ is contained in an open set $O$ which for all $n$ is evenly covered by the map $\phi_{n,1}=\phi_1\circ\phi_2\circ...\circ\phi_{n-1}:\Sigma^{(n)}\mapsto \Sigma$.
Then any point $\vec{x}\in S$ has a local open base of sets $U$ homeomorphic to $U_1\times \mathbb C$, where $U_1=\pi_1(U)$ is connected and $\mathbb C$ is the Cantor set.  \end{lemma}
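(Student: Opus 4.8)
The plan is to reduce the local structure of $S$ to a product of an ``arc'' factor coming from a single evenly covered set downstairs and a Cantor factor coming from the tower of sheets above it. Fix $\vec x=(x_1,x_2,\dots)\in S$ and let $O\ni x_1$ be the open set that is evenly covered by every $\phi_{n,1}$. For any connected open $U_1\subseteq O$ I would write $\phi_{n,1}^{-1}(U_1)=\bigsqcup_{s\in F_n}V_{n,s}$, where $\phi_{n,1}$ carries each sheet $V_{n,s}$ homeomorphically onto $U_1$ and $F_n$ is a finite index set with $|F_n|=\deg\phi_{n,1}$ (restricting the even covering of $O$ to the open subset $U_1$ keeps it evenly covered, and connectedness of $U_1$ makes each sheet connected). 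Because each $V_{n+1,s}$ is connected, its image under the bonding map $\phi_n$ lies in a single level-$n$ sheet; this defines $f_n\colon F_{n+1}\to F_n$, and the factorization $\phi_{n+1,1}=\phi_{n,1}\circ\phi_n$ shows that $\phi_n$ restricts to a homeomorphism of $V_{n+1,s}$ onto $V_{n,f_n(s)}$. Hence each $f_n$ is surjective and exactly $\deg\phi_n\ge 2$-to-one.

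Next I would construct the neighbourhood basis. Since the $\Sigma^{(n)}$ are locally connected, for each $m$ I may choose a connected open $W\ni x_m$ lying inside the single sheet of $\phi_{m,1}^{-1}(O)$ containing $x_m$, and put $U_1=\phi_{m,1}(W)\subseteq O$, so $\phi_{m,1}$ maps $W$ homeomorphically onto $U_1$. As $m$ ranges over $\mathbb N$ and $W$ shrinks, the sets $U=\pi_m^{-1}(W)$ form a neighbourhood basis at $\vec x$, since any basic neighbourhood $\pi_m^{-1}(W_0)$ can be shrunk to one of this special form. The point to stress, and the first genuine obstacle, is that the sets $\pi_1^{-1}(U_1)$ alone do \emph{not} form a basis: they shrink only in the arc direction while always retaining every sheet over $O$, so one must pass to level $m$ to shrink the Cantor direction as well.

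The key computation is that $U=\pi_m^{-1}(W)$ splits as a product. Writing $\phi_{n,m}=\phi_m\circ\cdots\circ\phi_{n-1}$ and letting $t_0\in F_m$ name the sheet containing $x_m$, the factorization $\phi_{n,1}=\phi_{m,1}\circ\phi_{n,m}$ together with the even covering of $O$ shows that $\phi_{n,m}$ carries each $V_{n,s}$ homeomorphically onto the full sheet $V_{m,f_{n,m}(s)}$, where $f_{n,m}=f_m\circ\cdots\circ f_{n-1}$; hence $W$ is evenly covered by every $\phi_{n,m}$, with sheets indexed by the cylinder $f_{n,m}^{-1}(t_0)\subseteq F_n$. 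Setting $K_W=\varprojlim\{f_{n,m}^{-1}(t_0),f_n\}$, the clopen set of threads in $\varprojlim F_n$ passing through $t_0$ at level $m$, I define $\Phi\colon U\to W\times K_W$ by $\Phi(\vec y)=\bigl(y_m,(s_n(\vec y))_{n\ge m}\bigr)$, where $s_n(\vec y)$ names the sheet containing $y_n$. The sheet-naming functions are locally constant, hence continuous, so $\Phi$ is continuous; its inverse sends $(z,(s_n))$ to the thread whose $n$th coordinate is $(\phi_{n,1}|_{V_{n,s_n}})^{-1}(z)$ for $n\ge m$ (and the forced value $\phi_{n,m}$-image for $n<m$), and is continuous for the same reason, so $\Phi$ is a homeomorphism. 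Finally $\pi_1(U)=\phi_{m,1}(W)=U_1$ is connected and $W\cong U_1$, while $K_W$ is a nonempty, compact, metrizable, totally disconnected space with no isolated points (the $f_n$ beyond level $m$ are at least $2$-to-one); by Brouwer's characterization $K_W\cong\mathbb C$, and therefore $U\cong U_1\times\mathbb C$, as required. Beyond the basis issue already noted, the remaining obstacle is purely the bookkeeping needed to verify that the sheet decomposition is compatible with all the bonding maps, so that both the product decomposition of $U$ and the identification of $K_W$ as an inverse limit of finite sets go through cleanly.
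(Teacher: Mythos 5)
Your proof is correct and takes essentially the same route as the paper's: both decompose a basic neighborhood $\pi_m^{-1}(W)$ into sheets lying over the evenly covered set and identify the Cantor factor with the space of compatible sheet choices --- the paper realizes that space concretely as the fiber $C=\pi_i^{-1}(p_i)\subseteq S$ and sends $\vec{x}$ to the fiber point lying in the same slices, while you realize it abstractly as the inverse limit $K_W=\varprojlim\{f_{n,m}^{-1}(t_0),f_n\}$ of finite index sets, which is the same object in different bookkeeping. (One small slip to fix: in your formula for $\Phi^{-1}$ the $n$th coordinate should be $(\phi_{n,m}|_{V_{n,s_n}})^{-1}(z)$ rather than $(\phi_{n,1}|_{V_{n,s_n}})^{-1}(z)$, since $z$ lies in $W\subseteq \Sigma^{(m)}$, not in $U_1\subseteq\Sigma$.)
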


\begin{proof}
 Let $\vec{p}\in S$, and suppose $U$ is an open subset of $S$ that contains $\vec{p}$. Without loss of generality, we may assume $U=\pi_i^{-1}(V)$ for some connected open  subset $V$ of $X_i$ containing $p_i$.  We may also assume that $U_1=\pi_1(U)=\phi_{(i-1,1)}(V)$ is evenly covered by $\phi_{n,1}$ for all $n$.
Let $C=\pi_i^{-1}(p_i)$.  Then $C$ is closed, and contained in $\Pi_{j>i}(\phi_{j-1,i}^{-1}(p_i))$, a product of finite sets.  Hence $C$ is totally disconnected.   Since $|\phi_{j-1}^{-1}(x)|\geq 2$ for any $x$ and any $j>i$, it is easy to check that $C$ has no isolated points.  Thus $C$ is a Cantor set.

 Define $h:\pi_i^{-1}(V)\mapsto V\times C$ by $h(\vec{x})=(x_i,q)$, where $q\in C$ is such that for each $j>i$, $q_j$ and $x_j$ are in the same slice of $V$ with respect to $\phi_{j-1,i}$.
 It is straightforward to check that $h$ is a homeomorphism.  Since $V\cong U_1$, we are done. \end{proof}

The solenoids constructed in this paper satisfy the conditions of Lemma~\ref{cantor}, where $O$ can be taken to be any proper arc contained in $\Sigma$.  The following corollary will help us show that certain points are not in the same orbit.

 \begin{corollary}\label{cantorcor} Let $S$ and $S'$  be as in Lemma~\ref{cantor}, and let $\vec{x}\in S$, $\vec{y}\in S'$.   If there is an homeomorhpism of $S$ amd $S'$ taking $\vec{x}$ to $\vec{y}$, then any neighborhood $U_1$ of $y_1$ in $S'$ contains a homeomorphic copy of some neighborhood $V_1$ of $x_1$ in $S$.
 \end{corollary}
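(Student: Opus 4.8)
The plan is to chain together two applications of Lemma~\ref{cantor} and to exploit the fact that in a product decomposition $U_1\times\mathbb{C}$ of a basic neighborhood (with $\mathbb{C}$ the Cantor factor), the projection $\pi_1$ collapses the totally disconnected factor while restricting to a homeomorphism on each connected slice. Fix an autohomeomorphism $h$ of $S$ with $h(\vec{x})=\vec{y}$, and fix an arbitrary neighborhood $U_1$ of $y_1$.

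First I would pull $U_1$ back: $\pi_1^{-1}(U_1)$ is a neighborhood of $\vec{y}$, so by Lemma~\ref{cantor} there is a basic neighborhood $W\subseteq\pi_1^{-1}(U_1)$ of $\vec{y}$ and a homeomorphism $W\cong W_1\times\mathbb{C}$ with $W_1=\pi_1(W)\subseteq U_1$ connected. The point to check here is that, under the decomposition produced by the lemma, $\pi_1$ is exactly the projection onto the connected factor $W_1$. This follows because if $W=\pi_i^{-1}(V)$, then for a coherent sequence $z_1=\phi_{(i-1,1)}(z_i)$, while the homeomorphism of the lemma sends $\vec{z}$ to $(z_i,q)$ and $\phi_{(i-1,1)}$ restricts to a homeomorphism of $V$ onto $W_1$; composing, the first coordinate of the image of $\vec{z}$ is precisely $z_1$. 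Consequently each slice $W_1\times\{d\}$ is carried homeomorphically onto $W_1$ by $\pi_1$.

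Next, since $h$ is a homeomorphism, $h^{-1}(W)$ is a neighborhood of $\vec{x}$; applying Lemma~\ref{cantor} a second time yields a basic neighborhood $W'\subseteq h^{-1}(W)$ of $\vec{x}$ together with a homeomorphism $W'\cong V_1\times\mathbb{C}$, where $V_1=\pi_1(W')$ is connected and, being the image $\phi_{(i-1,1)}(V_{W'})$ of an open set under a covering (hence open) map, is an open neighborhood of $x_1$. I would then choose a single ``horizontal'' slice $A$ inside $W'$ corresponding to $V_1\times\{c\}$; it is connected and homeomorphic to $V_1$. Its image $h(A)$ is then a connected subset of $h(W')\subseteq W\cong W_1\times\mathbb{C}$, still homeomorphic to $V_1$.

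The crux of the argument is the observation that a connected subset of $W_1\times\mathbb{C}$ must lie in a single slice $W_1\times\{d\}$, since its projection to the totally disconnected Cantor factor is connected and therefore a singleton. As noted above, $\pi_1$ restricts to a homeomorphism of $W_1\times\{d\}$ onto $W_1\subseteq U_1$, so $\pi_1(h(A))$ is a homeomorphic copy of $h(A)\cong V_1$ contained in $U_1$, which is exactly the conclusion of the corollary. I expect the only delicate point to be the bookkeeping that identifies $\pi_1$ with the correct product projection coming out of Lemma~\ref{cantor}; once that identification is made precise, the totally-disconnected-slice argument finishes the proof immediately.
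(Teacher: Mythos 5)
Your proof is correct and follows essentially the same route as the paper's: apply Lemma~\ref{cantor} once at $\vec{y}$ inside $\pi_1^{-1}(U_1)$ and once at $\vec{x}$ inside the pullback of $W$ under the autohomeomorphism, then use that connected subsets of $W_1\times\mathbb{C}$ lie in single slices homeomorphic (via $\pi_1$) to $W_1\subseteq U_1$. The paper compresses the slice argument into the single remark that the components of $W$ and $V$ are homeomorphic to $W_1$ and $V_1$ respectively; your explicit verification that $\pi_1$ agrees with the product projection is a worthwhile detail the paper leaves implicit, but it is not a different method.
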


\begin{proof} Let $h:S\rightarrow S'$ be a homeomorphism taking $\vec{x}$ to $\vec{y}$, and let $U_1$ be a neighborhood of $y_1$.  By Lemma~\ref{cantor}, there is a neighborhood $W$ of $\vec{y}$ such that $W\cong W_1\times \mathbb C$, where $W_1=\pi_1(W)$ is connected and contained in $U_1$.
Let $V$ be a neighborhood of $x$ such that $V\cong V_1\times \mathbb C$, where $V_1=\pi_1(V)$ is connected, and such that $h(V)\subset W$. Since the components of $W$ are all homeomorphic to $W_1$ and those of $V$ to $V_1$, it follows that $V_1$ is homeomorphic to a subset of $W_1\subset U_1$.  \end{proof}

 \begin{theorem} For each positive integer  $\kappa\geq 1$ and sequence $\vec{p}$ of integers $\geq 2$, $S(\Lambda_{\kappa},\vec{p})$ is $\frac{1}{\kappa+1}$-homogeneous.  \end{theorem}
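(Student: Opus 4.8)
The plan is to exhibit the $\kappa+1$ orbits as the level sets of a topological ``depth'' invariant attached to the first coordinate, and then to prove the two halves separately: each level set is a single orbit, and distinct levels lie in distinct orbits. I would argue by induction on $\kappa$, the base case $\kappa=1$ being Theorem~\ref{onehalf}.

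First I would attach to each point $x$ of the circle $\Sigma$ built from $\Lambda_\kappa$ a depth $d(x)\in\{0,1,\dots,\kappa\}$, defined intrinsically and locally so that it is automatically a homeomorphism invariant. Let $M\subseteq\Sigma$ be the closed set of points having no metrizable neighborhood. Declare $d(x)=0$ if $x\notin M$ (equivalently, $x$ is locally metrizable), and $d(x)=j+1$ if $x\in M$ has Cantor--Bendixson rank $j$ in $M$. Unwinding the inductive construction of $\Lambda_\kappa$, one checks that the locally metrizable points are exactly the generic ones lying inside a single bottom-level long-ray copy of $\Lambda_1^-$; that the non-first-countable points are precisely the depth-$1$ points; and, in general, that the $\kappa$ interface levels produced by the towers $\mathbb Z\times\Lambda_n^-$ realize exactly the depths $1,\dots,\kappa$, with the glued endpoint $\infty$ the unique point of depth $\kappa$. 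Thus $d$ attains all the values $0,1,\dots,\kappa$ and partitions $S(\Lambda_\kappa,\vec p)$ into the $\kappa+1$ candidate orbits $O_j=\{\vec s:d(s_1)=j\}$.

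For homogeneity within a level I would mimic the proof of Theorem~\ref{onehalf}. The key point is that the group of order-automorphisms of $\Lambda_\kappa$ fixing both endpoints acts transitively on each depth level; this follows by induction, using the translations $T_k$ to move between the $\mathbb Z$-indexed copies at the top level, an endpoint-fixing automorphism of a single copy of $\Lambda_{\kappa-1}^-$ (supplied by the inductive hypothesis and extended by the identity on the remaining copies) to move within a copy, and a metric-arc homeomorphism exactly as in Theorem~\ref{onehalf} for the depth-$0$ case. Given $\vec s,\vec w\in O_j$, such an automorphism $g$ with $g(s_1)=w_1$ yields $H_1=\hat g_1$, and for $n>1$ one takes $H_n$ to be $\hat g_{k(n)}$ followed by the rotation $R_k$ carrying $g(s_n)$ to $w_n$ (these agree up to a rotation, being the same point of $\Lambda_\kappa$ in possibly different copies). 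Since $\hat g$, the rotations $R_k$, and the translations $T_k$ all commute with the bonding maps, the sequence $(H_n)$ defines an autohomeomorphism of the inverse limit taking $\vec s$ to $\vec w$.

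Finally, to see that distinct levels give distinct orbits I would combine Corollary~\ref{cantorcor} with the invariance and monotonicity of $d$. Suppose an autohomeomorphism $h$ of $S$ carries $\vec x$ to $\vec y$ with $d(x_1)=k$ and $d(y_1)=l$. A sufficiently small neighborhood of a depth-$k$ point sits inside a single level-$k$ piece, over which the maximum of $d$ equals $k$; applying the corollary to $h$ and to $h^{-1}$ places, inside small neighborhoods of $y_1$ and of $x_1$, homeomorphic copies of neighborhoods of the other point. Because passing to a subspace preserves metrizability of neighborhoods (so the non-locally-metrizable set can only shrink) and can only lower Cantor--Bendixson ranks, the intrinsic depth of any point of such a copy is at most its ambient depth, so the maximal depth attained in a copy is bounded by the maximal ambient depth of the containing neighborhood. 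Running this in both directions forces $k\le l$ and $l\le k$, hence $k=l$. The main obstacle is exactly this last monotonicity step: verifying that the depth invariant, and in particular the Cantor--Bendixson rank computed in the closed set of non-locally-metrizable points, behaves correctly under the ``contains a homeomorphic copy of a neighborhood'' relation furnished by Corollary~\ref{cantorcor}, rather than only under genuine open embeddings.
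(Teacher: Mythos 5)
Your proposal is correct, and its overall architecture is the same as the paper's: induction on $\kappa$ with Theorem~\ref{onehalf} as base, a homeomorphism-invariant stratification of $\Sigma$ into $\kappa+1$ levels read off from the first coordinate, transitivity within a level via endpoint-fixing automorphisms of $\Lambda_\kappa$ (built inductively from the translations $T_k$ and automorphisms of single copies of $\Lambda_{\kappa-1}$, then lifted to the inverse limit by the hatted maps composed with rotations, all commuting with the bonding maps), and Corollary~\ref{cantorcor} to separate levels. Where you genuinely differ is in the invariant and in the separation step, and your version is arguably tighter. The paper defines ``Type $j$'' points by an ad hoc induction (Type $1$: a neighborhood of first-countable points; Type $2$: non-$G_\delta$ and locally the unique such point; Type $j+1$: a locally unique limit of Type $j$ points); your depth $d$, the Cantor--Bendixson rank inside the closed set $M$ of non-locally-metrizable points, is a closed-form packaging of the same stratification (on these spaces $d = \mathrm{Type}-1$). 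More substantially, to separate levels the paper argues that every neighborhood of a Type-$i$ point contains infinitely many Type-$j$ points ($j<i$), contradicting local uniqueness at $y_1$; this implicitly needs ``Type $j$ computed inside a homeomorphic copy'' (a mere subspace, not open) to transfer to ``Type $j$ in the ambient space,'' a point the paper dismisses as an ``easy induction'' even though the local-uniqueness clauses in the type definitions do not obviously survive non-open embeddings. Your monotonicity argument confronts exactly this issue, and the obstacle you flag at the end is in fact no obstacle: if $Y\subseteq X$ then any metrizable $X$-neighborhood intersects to a metrizable $Y$-neighborhood, so $M_Y\subseteq M_X\cap Y$, and Cantor--Bendixson derivatives are monotone ($A\subseteq B$ implies $A^{(\alpha)}\subseteq B^{(\alpha)}$), so intrinsic depth in any subspace is at most ambient depth, while on \emph{open} subspaces the two agree. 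Since each $M^{(j)}$ is closed, $y_1$ has a neighborhood $U_1$ on which ambient depth is at most $d(y_1)$; the copy of a neighborhood of $x_1$ inside $U_1$ then contains a point of intrinsic depth $d(x_1)$, forcing $d(x_1)\le d(y_1)$, and applying the same to $h^{-1}$ gives equality. So your proof closes cleanly, and this monotonicity lemma is a worthwhile supplement to the argument as it appears in the paper.
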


\begin{proof}  The proof is by induction.  Theorem~\ref{onehalf} takes care of the case $\kappa=1$.   It will be helpful to look at the case $\kappa=2$ before describing the inductive step.   So, let $S=S(\Lambda_2,\vec{p})$, and let $\infty$ denote the point $\partial \Lambda$ in $\Sigma$.
We claim that the three orbits are
\begin{enumerate}\item[(1)] $O_1=\{\vec{x}\in S : x_1=\infty\}$;
\item[(2)] $O_2=\{\vec{x}\in S : x_1=k, k\in \mathbb Z\}$;
\item[(3)] $O_3=\{\vec{x}\in S : x_1\notin \{\infty\}\cup \mathbb Z\}$.
\end{enumerate}

If $\vec{x}\in O_3$, then $x_1$ corresponds topologically to a point ($>0$) in the long  line $\Lambda$, and so
does $x_n$ for all $n$ (indeed the $x_n$'s all correspond to the same point in $\Lambda$).   Thus $\vec{x}$ is a point
of first countability.  Furthermore, $x_1$ has a neighborhood contained in the long line, so $\vec{x}$ has a neighborhood
consisting of points of first countability.

If $\vec{x}\in O_2$, then $x_1$ corresponds to the compactifying point $\omega_1$ in  $[0,\omega_1]$, and
so does each $x_n$.  So no coordinate is $G_\delta$, and it easily follows that $\vec{x}$ is not a $G_\delta$ point in $S$.

If $\vec{x}\in O_1$, then $x_1=\infty$  and for $n>1$, $x_n=\infty_i$ for some $i=0,1,...,n-1$.  Note that $\infty_i
=\lim_{k\rightarrow \infty}\infty_{i-1}+k=\lim_{k\rightarrow -\infty}\infty_i+k$.   So each $x_n$ is $G_\delta$, and it follows that
$\vec{x}$ is $G_\delta$ and hence a point of first countability.   But every neighborhood of $\vec{x}$ contains a point with first
coordinate $k$ for some $k\in \mathbb Z$, i.e., a non-$G_\delta$-point in $O_2$.

It follows from the above discussion that no point in $O_i$, $i=1,2,3$, is in the orbit of a point in $O_j$, $j\neq i$.  So it remains to
prove that for each $i$, if $\vec{x},\vec{y}\in O_i$, then $\vec{y}$  is in the orbit of $\vec{x}$.

Suppose  $\vec{x},\vec{y}\in O_1$, i.e., $x_1=y_1=\infty$.  Let $H_1:\Sigma\rightarrow \Sigma$ be the identity.  Suppose $n\geq 2$.  Then there
are $i,j<k(n)$ such that $x_n=\infty_i$ and $y_n=\infty_j$.   Let $H_n:\Sigma^{(k(n))}\rightarrow \Sigma^{(k(n))}$ be the rotation $R_k$, where
$k=j-i$ mod $k(n)$.  Then $(H_n)_{n\in \mathbb N}$ defines an autohomeomorphism of $S$ that maps $\vec{x}$ to $\vec{y}$.

Suppose $\vec{x},\vec{y}\in O_2$.   Then $x_1=p, y_1=q$ for some $p,q,\in \mathbb Z$, so let $H_1=T_k$, where $k=q-p$.  Suppose  $n\geq 2$.
There are $i,j<k(n)$ such that $x_n=\infty_i+p, y_n=\infty_j+q$.  Let $H_n=R_l\circ T_k$, where $l=j-i$ mod $k(n)$.  Note that
$T_k$ will take      $x_n=\infty_i+p$ to $y'_n=\infty_i+q$, and then the rotation $R_l$ takes $y_n'$ to $y_n$; hence $H_n(x_n)=y_n$.   Since
both $R_l$ and $T_k$ commute with the bonding maps, so does $H_n$.   So again, $(H_n)_{n\in \mathbb N}$ defines an autohomeomorphism of $S$ that maps
$\vec{x}$ to $\vec{y}$.

Finally, suppose $\vec{x},\vec{y}\in O_3$.  Then $x_1=p+w, y_1=q+z$ for some $p,q,\in \mathbb Z$ and $w,z\in L\setminus \{0\}$.   There is an autohomeomorphism
$A$ of $\Lambda$ which maps $w$ to $z$.  Let $\hat{A}$ be the autohomeomorphism of $\Sigma$ as we described previously, i.e., which applies $A$ in every copy of $\Lambda$, and then let $H_1=T_k\circ \hat{A}$, where $k=q-p$.
If $n\geq 2$, then $x_n=\infty_i+p+w$, $y_n=\infty_j+q + z$ for some $i,j<k(n)$.   Then let $H_n=R_l\circ T_k\circ \hat{A}$, where $l=j-i$ mod $k(n)$.
Similar reasoning to the previous case shows that these $H_n$'s  define an autohomeomorphism of $S$ that maps
$\vec{x}$ to $\vec{y}$.

To prove that the $\kappa+2$-orbits we will define for $S(\Lambda_{\kappa+1},\vec{p})$ really are different requires us to define the following ``types" of points: let $p$ be a point in some $\Lambda_\kappa$ or $\Lambda_\kappa/\partial\Lambda_\kappa$. Then $p$ is of
\begin{enumerate} \item[(1)] Type $1$ if $p$ has a neighborhood $N$ such that every point of $N$ is a point of first countability;
\item[(2)]Type $2$ if $p$ is not $G_\delta$, and has a neighborhood $N$ such that $p$ is the only non-$G_\delta$ point in $N$;
\item[(3)]Type $3$ if $p$ is a limit point of Type $2$ points, and has a neighborhood $N$ such that $p$ is the only limit of Type $2$ points in $N$.
\item[($\kappa+1$)] Type $\kappa+1$, for $\kappa \geq 3$, if $p$ is a limit point of Type $\kappa$ points, and has a neighborhood $N$ such that $p$ is the only limit of Type $\kappa$ points in $N$.
\end{enumerate}

Now suppose $\Lambda_\kappa$ satisfies:
\begin{enumerate}
\item[(i)] $\Lambda_\kappa$ has points of Type $i$ for $i=1,2,...,\kappa+1$, and each point of $\Lambda_\kappa$ is one of these types;
\item[(ii)] The endpoints of $\Lambda_\kappa$ are the only points in $\Lambda_\kappa$ of Type $\kappa+1$;
\item[(iii)] If $-\infty<x,y<\infty$ are the same type in $\Lambda_\kappa$, then there is an autohomeomorphism $h$ of $\Lambda_\kappa$ mapping $x$ to $y$ and keeping the endpoints fixed.
\end{enumerate}

It is easily checked that $\Lambda_3$ satisfies the above conditions.  Now we show that if $\Lambda_\kappa$ satisfies these conditions, then $S=S(\Lambda_{\kappa+1},\vec{p})$ has orbits $O_i=\{\vec{x}\in S:x_1\textrm{ has Type }i\}$ for $i=1,2,...,\kappa+2$.

Recall that  $\Lambda_{\kappa+1}$ is the  the two point compactification of $\Lambda_{\kappa+1}^o=\mathbb Z\times \Lambda_\kappa^-$ with the lexicographic order, where $\Lambda_\kappa^-=\Lambda_\kappa\setminus \{\operatorname{max} \Lambda\} $.   $\Lambda_{\kappa+1}^o$ can be thought of as the real line with each open interval $(k,k+1)$ replaced by $\Lambda_\kappa\setminus \partial\Lambda$.  By condition $(ii)$ above,  the points $k=(k,\operatorname{min} \Lambda)$ are the only Type $\kappa+1$ points in $\Lambda_{\kappa+1}$, and note that this makes the endpoints of $\Lambda_{\kappa+1}$ the only Type $\kappa+2$ points in that space, and hence the point $\infty$ of the corresponding quotient space $\Sigma$ the only Type $\kappa+2$ point there.

Let $\vec{x}\in O_i$ and $\vec{y}\in O_j$ with $i\neq j$.   We show that  there is no autohomeomorphism of $S$ taking $\vec{s}$ to $\vec{y}$.  We may assume $i>j$. By Corollary~\ref{cantorcor}, every neighborhood of $y_1$ contains a copy of some neighborhood of $x_1$.   By an easy induction, every neighborhood of $x_i$ contains non-$G_\delta$ points.  So we have a contraction if $j=1$, since $y_1$ then has a neighborhood of all $G_\delta$ points.  Suppose $j>0$.  Another easy induction shows that every neighborhood of $x_i$ contains infinitely many points of Type $j$.  So again we have a contradiction since $y_1$ has a neighborhood $N$ with only one Type $j$ point.  It follows that no autohomeomorphism of $S$ maps $\vec{x}$ to $\vec{y}$.

Suppose now that $\vec{x}$ and $\vec{y}$ are in the same $O_a$.  If $a=\kappa+2$, then $x_1=y_1=\infty$, and every $x_m$ (resp. $y_m$) for $m\geq 2$ is $\infty_i$ (resp. $\infty_j$) for some $i,j<m$.  Thus the appropriate rotation $R_k$ will map $x_m$ to $y_m$ and commute with the bonding maps; it follows that there is an authohomeomorphism of $S$ mapping $\vec{x}$ to $\vec{y}$.  If $a=\kappa+1$, then $x_1$ and $y_1$ correspond to points in $\mathbb Z$ in $\Lambda_{\kappa+1}^o$.  This case is easily taken care of by the same argument as for $O_2$ in the case $\kappa=2$.

If $a<\kappa+1$, the reasoning is similar to that of $O_3$ above.  To wit, $x_1=p+w, y_1=q+z$ for some $p,q,\in \mathbb Z$ and $w,z\in \Lambda_\kappa\setminus \partial\Lambda$, where $w$ and $z$ are of the same type.
By assumption $(iii)$ on $\Lambda_\kappa$, there  is an autohomeomorphism
$A$ of $\Lambda_\kappa$ which maps $w$ to $z$ and fixes the endpoints.  Let $H_1=\hat{A}_1$ be the autohomeomorphism of $\Sigma$ which applies $A$ in every copy of $\Lambda_\kappa$ making up $\Lambda_{\kappa+1}^o$, and then let $H_1=T_k\circ \hat{A}$, where $k=q-p$.
If $m\geq 2$, then $x_m=\infty_i+p+w$, $y_m=\infty_j+q + z$ for some $i,j<k(m)$.   Then let $H_m=R_l\circ T_k\circ \hat{A}$, where $l=j-i$ mod $k(m)$.  Similar reasoning to previous cases shows that these $H_m$'s  define an autohomeomorphism of $S$ that maps
$\vec{x}$ to $\vec{y}$.

Finally, we need to prove that $\Lambda_{\kappa+1}$ satisfies the inductive conditions.  Condition (i) is easily seen, and condition (ii) was already mentioned. It remains to check (iii).   Suppose $x,y \in \Lambda_{\kappa+1}^o$ are of the same type.  If that type is $\kappa+1$, then $x=p, y=q$ for some $p,q,\in \mathbb Z$,  so we can let $h=T_k$, where $k=q-p$. If the type is $<\kappa+1$, then $x=p+w, y=q+z$ for some $p,q,\in \mathbb Z$ and $w,z\in \Lambda_\kappa^o$, where $w$ and $z$ are of the same type.
By assumption $(iii)$ on $\Lambda_n$, there  is an autohomeomorphism
$A$ of $\Lambda_n$ which maps $w$ to $z$ and fixes the endpoints.  Let $h=T_k\circ\hat{A}$, where $\hat{A}$ is the map which applies $A$ to each copy of $\Lambda_\kappa$ making up $\Lambda_{\kappa+1}$ and keeps other points fixed.  Then $h$ is an autohomeomorphism of $\Lambda_{\kappa+1}$ which maps $x$ to $y$ and keeps the endpoints fixed.  \end{proof}

We finish this section with our example for $\kappa=1$; i.e. a nonmetric homogeneous circle-like indecomposable continuum. Recall that, as mentioned in the Introduction, all Vietoris solenoids are homogeneous, and metric homogeneous circle-like continua were classified in \cite{Ro}, but there do not seem to be any results in the literature that would explicitly prove the classification incomplete in the nonmetric case. In addition, in \cite{GutekHagopian} Gutek and Hagopian asked if there exists a nonmetrizable circle-like homogeneous indecomposable continuum having only arcs for nondegenerate proper subcontinua. Our example satisfies all but the last mentioned property, and it should be clear that all of its proper subcontinua are homeomorphic to an order-homogeneous nonmetric arc.

\begin{example} There is an ordered continuum $\Lambda$ such that $S(\Lambda, \vec{p})$ is a nonmetric homogeneous indecomposable circle-like continuum.\end{example}

  \begin{proof} Let $\Lambda$ be any nonmetric ordered continuum which is {\it order-homogeneous},
   i.e.,  $\Lambda\cong [x,y]$ for any $x<y\in \Lambda$.\footnote{``Order-homogeneous" is a special case (for an ordered continuum) of ``hereditarily equivalent".}    Such spaces
  (with additional properties not relevant here) have been constructed by, for example, K.P. Hart and J. van Mill \cite{hartvanmill}.
  Using methods of this section, it is easy to see that the corresponding spaces $\Sigma^{(n)}$ and $S(\Lambda,\vec{p})$ are homogeneous.
    $S(\Lambda,\vec{p})$ is nonmetric because it admits a continuous surjection onto the nonmetric first coordinate,  and it is indecomposable and circle-like by Theorem~\ref{indecomposable}.  \end{proof}
In addition to the above example, in private communication, Michel Smith informed us that he conjectures the results in \cite{SmithStone} could be used to exhibit other nonmetric homogeneous circle-like indecomposable continua.

\section{$1/\kappa$-homogeneity for infinite $\kappa$}

In the section, we show that for any infinite cardinal $\kappa$, there is a $1/\kappa$-homogeneous indecomposable circle-like continuum.   These continua are also of the form $S(\Lambda,\vec{p})$ for some ordered continuum $\Lambda$, but now $\Lambda$ is going to be simply the long line of some ordinal length.  We can also think of these $\Lambda$ as being obtained by putting copies of the standard long line end to end some ordinal number of times.

Ordinal multiplication will be useful here, so we recall some basics (see, e.g., the first chapter of \cite{kunen} for an excellent sketch of ordinal arithmetic).  If $\alpha$ and $\beta$ are ordinals, then  $\alpha\cdot \beta$ is the ordinal whose order type is that of the ordinal $\alpha$ (recall an ordinal may be thought of as the set of its predecessors) laid end to end $\beta$ times.  Formally, we can define $\alpha\cdot \beta$ as the ordinal whose order type is that of $\beta\times\alpha$  with the lexicographic order.   Multiplication is not commutative, e.g., $\omega\cdot 2$ is equal to two copies of $\omega$ end to end; it is the same as $\omega+\omega$.  However, $2\cdot \omega$ is the ordinal $2$ laid end to end $\omega$ times; note that the resulting order type is $\omega$, so $2\cdot \omega =\omega$.   Ordinal exponentiation is defined inductively.   $\gamma^0=1$, $\gamma^{\beta+1}=\gamma^\beta\cdot \gamma$, and if $\alpha$ is a limit ordinal, $\gamma^\alpha=sup\{\gamma^\beta:\beta<\alpha\}$.   So, e.g., $\omega^2=\omega\cdot\omega$, $\omega^3=
\omega^2\cdot\omega$, etc., and $\omega^\omega=sup\{\omega^n:n<\omega\}$.   In particular, note that $\omega^\omega$ is a countable ordinal.

The following lemma will be useful to determine a lower bound for the number of orbits of some $S(\Lambda,\vec{p})$'s where $\Lambda$ is a long line of some ordinal length.

\begin{lemma}\label{omegaalpha}  Let $\delta=\omega_1\cdot\omega^\alpha$, where $\alpha$ is some ordinal, and consider the closed long line $[0,\delta]$.   Then for any $x<\delta$, the interval $[x,\delta]$ cannot be homeomorphically embedded in $[0,y]$ for any $y<\delta$.  \end{lemma}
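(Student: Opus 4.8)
The plan is to attach to each point of a closed long line a Cantor--Bendixson rank measuring how deeply it sits inside the set of non--first-countable points, and to show this rank is a \emph{local} topological invariant that is monotone under embeddings. Write $P$ for the set of non--first-countable points of a space; in a closed long line $[0,\mu]$ these are exactly the $\beta$ with $\operatorname{cf}(\beta)=\omega_1$. Let $\overline P$ be its closure and define nested closed sets by $B_0=\overline P$, $B_{\xi+1}=B_\xi'$ (the accumulation points of $B_\xi$), and $B_\lambda=\bigcap_{\xi<\lambda}B_\xi$ at limits, and put $\operatorname{rk}(p)=\sup\{\xi:p\in B_\xi\}$. Two soft facts drive the argument. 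First, $\operatorname{rk}$ is local: whether $\operatorname{rk}(p)\ge\xi$ depends only on the trace of the space on an arbitrarily small neighborhood of $p$, since first countability, ``being a limit point'', and closure are all local conditions. Second, rank is monotone under embeddings: if $e\colon Z\to W$ is an embedding and $p\in Z$, then $\operatorname{rk}^W(e(p))\ge\operatorname{rk}^Z(p)$. I would prove the latter by transfinite induction, the only nontrivial ingredient being the base case: first countability passes to subspaces, so a point failing to be first countable in a subspace also fails in the ambient space; hence $e$ carries non--first-countable points of $Z$ into non--first-countable points of $W$, and limit points and the recursion transfer directly.

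The computational heart is the claim that for $\delta=\omega_1\cdot\omega^\alpha$ one has $\operatorname{rk}(\delta)=\alpha$ in $[0,\delta]$ and moreover $B_\alpha=\{\delta\}$, i.e.\ $\delta$ is the \emph{unique} point of rank $\ge\alpha$. I would prove this by transfinite induction on $\alpha$. For $\alpha=0$, $\delta=\omega_1$ is the only non--first-countable point, so $\overline P=\{\omega_1\}$ and the claim is immediate. For the successor step, write $\omega^{\alpha+1}=\omega^\alpha\cdot\omega$, so $\delta'=\omega_1\cdot\omega^{\alpha+1}=\sup_n\delta_n$ with $\delta_n=\omega_1\cdot\omega^\alpha\cdot n$. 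Each half-open segment $(\delta_{n-1},\delta_n]$ is order-isomorphic, via ordinal subtraction, to the top part of $[0,\omega_1\cdot\omega^\alpha]$, and near its top the first-countability structure agrees with that space; by locality and the inductive hypothesis $\operatorname{rk}(\delta_n)=\alpha$, and $\delta_n$ is the only point of rank $\ge\alpha$ in that segment. Thus the rank-$\alpha$ points of $[0,\delta']$ are exactly $\{\delta_n:n\ge 1\}$, whose unique accumulation point is $\delta'$; hence $B_{\alpha+1}=\{\delta'\}$ and $\operatorname{rk}(\delta')=\alpha+1$. For a limit $\lambda$, left multiplication $\omega_1\cdot(-)$ is continuous, so $\delta=\omega_1\cdot\omega^\lambda=\sup_{\beta<\lambda}\omega_1\cdot\omega^\beta$; for each $\beta<\lambda$ the points $\omega_1\cdot\omega^{\beta'}$ with $\beta\le\beta'<\lambda$ have rank $\ge\beta$ and accumulate at $\delta$, forcing $\operatorname{rk}(\delta)\ge\beta+1$ and hence $\operatorname{rk}(\delta)\ge\lambda$, while locality together with the inductive uniqueness shows every $p<\delta$ has rank $<\lambda$.

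Granting this computation, the lemma follows quickly. Fix $x<\delta$. Since rank is local and $(x,\delta]$ is a neighborhood of $\delta$ in $[x,\delta]$, the rank of $\delta$ computed in the subspace $[x,\delta]$ still equals $\alpha$. On the other hand, for any $y<\delta$ and any $p\in[0,y]$, monotonicity under the inclusion $[0,y]\hookrightarrow[0,\delta]$ gives $\operatorname{rk}^{[0,y]}(p)\le\operatorname{rk}^{[0,\delta]}(p)<\alpha$, the last inequality because $p\le y<\delta$ and $\delta$ is the only point of $[0,\delta]$ of rank $\ge\alpha$. If there were an embedding $e\colon[x,\delta]\to[0,y]$, monotonicity would yield $\alpha=\operatorname{rk}^{[x,\delta]}(\delta)\le\operatorname{rk}^{[0,y]}(e(\delta))<\alpha$, a contradiction.

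I expect the main obstacle to be the inductive computation $\operatorname{rk}(\delta)=\alpha$, and specifically the bookkeeping at the gluing points $\delta_n$: one must verify that cutting $[0,\delta']$ into the segments $(\delta_{n-1},\delta_n]$ genuinely reproduces the topology of $[0,\omega_1\cdot\omega^\alpha]$ near the top, which rests on the behavior of cofinalities of ordinals under the representation $\delta_n=\omega_1\cdot\omega^\alpha\cdot n$ and on the local nature of the rank. By contrast, the embedding-monotonicity lemma is conceptually the enabling step but is a routine transfinite induction once the correct direction of the first-countability inclusion is noted.
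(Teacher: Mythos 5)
Your proposal is correct, but it takes a genuinely different route from the paper's. The paper argues directly on a hypothetical embedding $h\colon[x,\delta]\to[0,y]$ by induction on $\alpha$: at limit stages it restricts $h$ to $[x,\gamma]$ for $\gamma=\omega_1\cdot\omega^\beta>\max\{x,y\}$ with $\beta<\alpha$; at successor stages it writes $\delta=\gamma\cdot\omega$, locates the block $[\gamma\cdot(n-1),\gamma\cdot n)$ containing $h(\delta)$, and restricts $h$ to a subinterval $[z,\gamma\cdot m]\cong[x',\gamma]$ whose image lies inside a copy of $[0,y']$ with $y'<\gamma$, contradicting the induction hypothesis. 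You instead package the obstruction into an invariant --- a Cantor--Bendixson rank seeded by the non--first-countable points --- prove it is local and embedding-monotone, and reduce the lemma to the computation $B_\alpha([0,\delta])=\{\delta\}$, which is the classical derivative computation for the ordinal space $[1,\omega^\alpha]$ transported along $\eta\mapsto\omega_1\cdot\eta$ (indeed $\overline{P}=\{\omega_1\cdot\eta:1\le\eta\le\omega^\alpha\}$). Your rank-$0$ case is exactly the paper's base case ($\omega_1$ is not $G_\delta$ while all smaller points are). What your method buys: a reusable local invariant that separates the points $\omega_1\cdot\omega^\alpha$ for distinct $\alpha$ (which is what Corollary~\ref{nohomeo} really needs), and it never has to manipulate the hypothetical embedding itself, sidestepping tacit points in the paper's successor step (that $h$ is effectively order-preserving, and that a suitable $z$ with $h(z)\ge\gamma\cdot(n-1)$ exists). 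What the paper's route buys: brevity and no machinery. Two quibbles with your write-up, neither a gap: the non--first-countable points of $[0,\mu]$ are those of \emph{uncountable} cofinality, not only cofinality exactly $\omega_1$ (this matters when $\alpha\ge\omega_2$, but is harmless since $\overline{P}$, and hence the whole derivation, is unchanged); and the successor-step gluing is best justified by noting explicitly that $(\delta_n,\delta_n+\omega_1)$ contains no non--first-countable points, so the derivation near $\delta_n$ sees only the segment $(\delta_{n-1},\delta_n]$ --- which is precisely the locality argument you allude to.
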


  \begin{proof}  By induction on $\alpha$.  If $\alpha=0$, then $\delta=\omega_1\cdot\omega^0=\omega_1\cdot 1=\omega_1$, and the result is well-known and easy to prove (e.g., $\omega_1$ is not $G_\delta$ in $[0,\omega_1]$, but any point $<\omega_1$ is  $G_\delta$).  So suppose $\alpha>0$ and the result holds for any $\beta<\alpha$.

  {\it Case 1.  $\alpha$ is a limit ordinal. }  Suppose by way of contradiction that $h:[x,\delta]\rightarrow [0,y]$ is a homeomorphic embedding,
  where $y<\delta$.    Choose $\beta<\alpha$ such that $\gamma=\omega_1\cdot \omega^\beta$ is greater than $max\{x,y\}$.
  Then $h$ embeds $[x,\gamma]$ homeomorphically into $[0,y]$ with $y<\gamma$, contradicting the induction hypothesis.

  {\it Case 2.  $\alpha=\beta+1$.}  Let $\gamma=\omega_1\cdot \omega^\beta$, and note that $\delta=\omega_1\cdot \omega^\alpha=
  \omega_1\cdot \omega^{\beta+1}=\omega_1\cdot \omega^\beta\cdot \omega=\gamma\cdot \omega$.   So $[0,\delta)$ is the same as countably many
  copies of $[0,\gamma)$ laid end to end in order type $\omega$, and $\delta =sup\{\gamma\cdot n:n<\omega\}$.

    Suppose  $h:[x,\delta]\rightarrow [0,y]$ is a homeomorphic embedding,
  where $y<\delta$. Let $n<\omega$ be least such that $\gamma\cdot n\geq h(\delta)$. Then $h(\delta)> \gamma\cdot(n-1)$, so there is some $k\in \omega$ with
  $h([\gamma\cdot (k-1),\gamma\cdot k])\subset [\gamma\cdot (n-1),\gamma\cdot n)$.  But $[\gamma\cdot (k-1),\gamma\cdot k]\cong [0,\gamma]$ and
  $[\gamma\cdot (n-1),\gamma\cdot n)\cong [0,\gamma)$,
  so this contradicts the induction hypothesis.
  \end{proof}

\begin{corollary}\label{nohomeo}  Let $\Lambda$ be a closed long line of some ordinal length.  If $\vec{x},\vec{y}\in S(\Lambda,\vec{p})$, $x_1=\omega_1\cdot \omega^\alpha$, and
$y_1=\omega_1\cdot \omega^\beta$, with $\alpha\neq \beta$, then no autohomeomorphism of $S(\Lambda,\vec{p})$ maps $\vec{x}$ to $\vec{y}$.
\end{corollary}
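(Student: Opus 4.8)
The plan is to derive a contradiction from Corollary~\ref{cantorcor} (a homeomorphism forces neighborhoods of the image point to contain copies of neighborhoods of the source point) together with the embedding obstruction of Lemma~\ref{omegaalpha}. First I would reduce to a single direction. Since an autohomeomorphism is invertible, replacing $h$ by $h^{-1}$ and interchanging the names of $\vec x$ and $\vec y$ if necessary, I may assume $\alpha>\beta$ and suppose toward a contradiction that some autohomeomorphism $h$ of $S(\Lambda,\vec p)$ satisfies $h(\vec x)=\vec y$, where $x_1=\omega_1\cdot\omega^\alpha$ and $y_1=\omega_1\cdot\omega^\beta$. Both of these are interior points of the closed long line $\Lambda$, so in $\Sigma$ they have genuine two-sided interval neighborhoods, and every neighborhood of the interior point $x_1=\omega_1\cdot\omega^\alpha$ contains a closed left segment $[x,\omega_1\cdot\omega^\alpha]$ for some $x<\omega_1\cdot\omega^\alpha$.

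Next I would exhibit a neighborhood of $y_1$ that is ``too simple'' to contain a copy of any neighborhood of $x_1$. Because $\beta<\alpha$ we have the ordinal inequality $\omega_1\cdot\omega^\beta+1<\omega_1\cdot\omega^\beta\cdot 2\le\omega_1\cdot\omega^\beta\cdot\omega=\omega_1\cdot\omega^{\beta+1}\le\omega_1\cdot\omega^\alpha$. Setting $y=\omega_1\cdot\omega^\beta+1$, I would choose the basic neighborhood $U_1=(a,y)$ of $y_1$ in $\Sigma$ with $a<y_1$. Its closure $[a,y]$ is a closed subinterval of the initial segment $[0,y]$ of $\Lambda$, so $U_1$ embeds homeomorphically into $[0,y]$ with $y<\omega_1\cdot\omega^\alpha$.

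Now I would invoke Corollary~\ref{cantorcor}: since $h$ carries $\vec x$ to $\vec y$, the neighborhood $U_1$ of $y_1$ must contain a homeomorphic copy of some neighborhood $V_1$ of $x_1$. Whatever $V_1$ is produced, it contains a closed left segment $[x,\omega_1\cdot\omega^\alpha]$ as above. Composing the embedding of (the copy of) $V_1$ into $U_1$ with the embedding $U_1\hookrightarrow[0,y]$ then yields a homeomorphic embedding of $[x,\omega_1\cdot\omega^\alpha]$ into $[0,y]$ with $y<\omega_1\cdot\omega^\alpha$, directly contradicting Lemma~\ref{omegaalpha} applied with $\delta=\omega_1\cdot\omega^\alpha$. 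This completes the argument.

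I expect the only real subtlety — and hence the main place to be careful — to be the claim that a sufficiently small neighborhood of $y_1=\omega_1\cdot\omega^\beta$ genuinely embeds into an initial segment $[0,y]$ with $y<\omega_1\cdot\omega^\alpha$, i.e. that extending the neighborhood slightly past $y_1$ on the right does not push its ordinal complexity up to $\omega_1\cdot\omega^\alpha$. This is exactly where the ordinal arithmetic $\beta<\alpha\Rightarrow\omega_1\cdot\omega^\beta+1<\omega_1\cdot\omega^\alpha$ is used, and it is what makes the left-segment obstruction of Lemma~\ref{omegaalpha} bite; everything else is a routine application of the two cited results (and of the running hypothesis that the entries of $\vec p$ are $\ge 2$, so that Corollary~\ref{cantorcor} applies to $S(\Lambda,\vec p)$).
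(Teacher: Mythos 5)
Your proposal is correct and takes essentially the same route as the paper's proof, which reads in full: w.l.o.g.\ $\alpha>\beta$; by Corollary~\ref{cantorcor} every neighborhood of $y_1$ must contain a homeomorphic copy of a neighborhood of $x_1$; but this contradicts Lemma~\ref{omegaalpha}. Your write-up merely fills in the details the paper leaves implicit --- the choice of the neighborhood $(a,\omega_1\cdot\omega^\beta+1)$ of $y_1$, the fact that every neighborhood of $x_1$ contains a closed segment $[x,\omega_1\cdot\omega^\alpha]$, and the ordinal inequality $\omega_1\cdot\omega^\beta+1<\omega_1\cdot\omega^\alpha$ --- all of which are exactly the intended reading.
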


\begin{proof}  Without loss of generality, assume $\alpha>\beta$.  By Corollary~\ref{cantorcor}, every neighborhood of $y_1$ must contain a homeomorphic copy of a neighborhood of $x_1$.   But this would contradict Lemma~\ref{omegaalpha}.   \end{proof}

The next result shows that for any infinite cardinal $\kappa$, there is a $1/\kappa$ homogeneous space of the form $S(\Lambda,\vec{p})$; in fact, taking $\Lambda$ to be the closed long line of length $\omega_1\cdot \omega^\kappa$ works.  If $\kappa=\omega$, then this says that taking countably many copies of the standard long line and lining them up in type $\omega^\omega$ works.  As noted above,  $\omega^\omega$ is a countable ordinal,
so $\omega_1\cdot \omega^\omega$ is an ordinal strictly between the cardinals $\omega_1$ and $\omega_2$.   So is $\omega_1\cdot \omega^{\omega_1}$,
but this can be simplified.  Indeed, for any uncountable cardinal $\kappa$, $\omega^\kappa=\kappa$.   (The reason: $\alpha<\beta$ implies $\omega^\alpha<\omega^\beta$, so $\omega^\kappa$ has to be at least $\kappa$, and one may show by induction that $\alpha<\kappa$ implies $\omega^\alpha<\kappa$, so it can't be more than $\kappa$.) So $\omega_1\cdot \omega^{\omega_1}=\omega_1\cdot\omega_1=\omega_1^2$, and hence the long line of this length is the same as the standard long line laid end to end $\omega_1$ times.   If $\kappa\geq \omega_2$, the formula can be simplified even more:  $\omega_1\cdot \omega^\kappa=\omega_1\cdot \kappa=\kappa$.   Now we will prove:

\begin{theorem} Let $\kappa$ be an infinite cardinal.  If $\Lambda$ is the closed long line of length $\omega_1\cdot \omega^\kappa$, then $S(\Lambda,\vec{p})$ is $1/\kappa$-homogeneous.   In particular, the long line of length $\omega_1\cdot \omega^\omega$ yields a $1/\omega$-homogeneous continuum, the one of length $\omega_1^2$ yields a $1/{\omega_1}$-homogeneous continuum, and for $\kappa\geq \omega_2$, the long line of length $\kappa$ yields a $1/\kappa$-homogeneous continuum.  \end{theorem}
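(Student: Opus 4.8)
The plan is to count the orbits only as a \emph{cardinal}, since $\frac{1}{\kappa}$-homogeneity asks exactly that $\operatorname{Homeo}(S)$ have $\kappa$ orbits, and to exploit the fact that the orbit of $\vec x$ is constrained by the local structure of its first coordinate $x_1\in\Sigma$. Write $\delta=\omega_1\cdot\omega^\kappa$, so $\Lambda=[0,\delta]$ and $\Sigma=\Lambda/\partial\Lambda$ with collapsed point $\infty$. I would establish the two inequalities separately: that there are at least $\kappa$ orbits, and at most $\kappa$ orbits. Throughout, the only autohomeomorphisms I need are the \emph{descending} ones built from an order-autohomeomorphism $A$ of $\Lambda$ fixing $\partial\Lambda$ together with rotations $R_k$, assembled coordinatewise exactly as in Theorem~\ref{onehalf} and in the finite-case theorem; these commute with the bonding maps and hence define autohomeomorphisms of the inverse limit.

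For the lower bound I would invoke Corollary~\ref{nohomeo} directly. For each $\alpha<\kappa$ the point $\omega_1\cdot\omega^\alpha$ satisfies $\omega_1\cdot\omega^\alpha<\delta$, so it names an interior point of $\Lambda$ and a genuine point of $\Sigma$, and by surjectivity of $\pi_1$ there is $\vec x\in S$ with that first coordinate. Corollary~\ref{nohomeo} says that distinct $\alpha$ give points lying in distinct orbits, so the $\kappa$-many ordinals $\alpha<\kappa$ produce $\kappa$ pairwise distinct orbits. Hence $S(\Lambda,\vec p)$ has at least $\kappa$ orbits.

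For the upper bound I would partition the points of $\Sigma$ according to whether $x_1$ has a metrizable neighborhood. The \emph{generic} points, those lying strictly inside one of the blocks $(\omega_1\cdot\gamma,\omega_1\cdot(\gamma+1))$ with $\gamma<\omega^\kappa$, are locally metrizable; the remaining \emph{non-generic} points are exactly $\infty$ together with the boundary points $\omega_1\cdot\eta$ for $1\le\eta<\omega^\kappa$, since every neighborhood of such a point meets an $\omega_1$-cofinal approach. There are at most $|\omega^\kappa|=\kappa$ non-generic points, and for a fixed non-generic value $c$ all of $\pi_1^{-1}(c)$ forms a single orbit: two such $\vec x,\vec x'$ have higher coordinates differing only by which copy of $\Lambda$ they occupy inside each $\Sigma^{(k(n))}$, and the coherent rotations $R_k$ identify them, just as for the orbits $O_1$ and $O_2$ in the finite case. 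This accounts for at most $\kappa$ orbits. For the generic points I would show that all $\vec x$ whose first coordinate lies in a \emph{fixed} block form one orbit: given two such $\vec x,\vec y$, construct $A$ that is the identity off that block and, inside the block (a copy of the closed long ray), moves $x_1$ to $y_1$ by the metric-arc argument of Theorem~\ref{onehalf}; then lift by $\hat A$ followed by the appropriate rotation, exactly as for $O_3$ in the finite case. Since the blocks are indexed by $\gamma<\omega^\kappa$, this gives at most $\kappa$ further orbits. Summing, $S(\Lambda,\vec p)$ has at most $\kappa+\kappa=\kappa$ orbits, and with the lower bound the count is exactly $\kappa$.

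I expect the main work, and the only place where the long line enters in an essential new way compared with the finite case, to be the structural step in the upper bound: proving that the non-generic points of $\Sigma$ are precisely $\{\infty\}\cup\{\omega_1\cdot\eta:1\le\eta<\omega^\kappa\}$ and therefore number at most $\kappa$. This rests on the local analysis of $[0,\delta]$, that interior block points are locally metrizable while each $\omega_1\cdot\eta$ is not, and on the ordinal arithmetic $|\omega^\kappa|=\kappa$ recorded before the theorem. It is worth emphasizing that, unlike the finite-case construction with its $\mathbb Z$-indexed blocks and translations $T_k$, the blocks here are indexed by the rigid ordinal $\omega^\kappa$ and cannot be permuted by any order-autohomeomorphism of $\Lambda$; the argument succeeds nonetheless because $\frac{1}{\kappa}$-homogeneity requires only the cardinality of the orbit set, for which collapsing each block's generic points to a single orbit and bounding the non-generic points by $\kappa$ suffices.
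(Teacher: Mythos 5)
Your proposal is correct and takes essentially the same approach as the paper: the lower bound comes from Corollary~\ref{nohomeo} applied to the points $\omega_1\cdot\omega^\alpha$ for $\alpha<\kappa$, and the upper bound comes from splitting $\Sigma$ into $\kappa$ many special points plus $\kappa$ many open blocks homeomorphic to $(0,\omega_1)$, with each $\pi_1$-preimage of a special point or of a block contained in a single orbit via coherent rotations and block-supported autohomeomorphisms lifted coordinatewise. The only cosmetic difference is that you identify the special points by failure of local metrizability, while the paper uses the set $NG$ of non-$G_\delta$ points together with limits of such points; these coincide here, and both arguments, like yours, settle only the cardinality of the orbit set while leaving open whether these pieces are exactly the orbits.
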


\begin{proof}  Let $\kappa$ be an infinite cardinal, let $\Lambda$ be the closed long line of length $\delta=\omega_1\cdot \omega^\kappa$.  We will show that $S(\Lambda,\vec{p})$ is $1/\kappa$-homogeneous; the ``In particular..." then follows by the remarks in the preceding paragraph.  That $S(\Lambda,\vec{p})$ has
at least $\kappa$ many orbits is immediate from Corollary~\ref{nohomeo}.

It remains to show that there are no more than $\kappa$ many orbits.  It is easy to see that one may use the rotations $R_j$ as previously defined  to show that if $x_1=y_1$, then $\vec{x}$ and $\vec{y}$ are in the same orbit.   Let $NG$ be the set of all points $x\in \Sigma$ that are either  not $G_\delta$ in $\Sigma$ or are a limit of non-$G_\delta$ points.  For each $x\in NG$, $$O_x=\{\vec{x}\in S(\Lambda,\vec{p}):x_1=x\}$$ is either an orbit or is properly contained in one. (We don't know which, but conjecture the former.)   Note that $NG$ is closed in $\Sigma$, and that
$\Sigma\setminus NG$ breaks up into $\kappa$ many components each homeomorphic in a natural way to the standard open long line $(0,\omega_1)$.  Indeed, note that if $\alpha\in NG$, $\alpha<\kappa$, then $\alpha+\omega_1$ is the least point in $NG$ greater than $\alpha$.   It follows that
$\Sigma\setminus NG$ is equal  to

$$\bigcup\{(\alpha,\alpha+\omega_1):\alpha<\kappa,  \alpha\in NG,\text{ or }\alpha=0\}.$$

If $x_1$ and $y_1$ fall
into the same maximal interval $(\alpha, \alpha+\omega_1)$  of $\Sigma\setminus NG$, then there is an autohomeorphism $h$ of $[\alpha,\alpha+\omega_1]$
 sending $x_1$ to $y_1$ (and leaving $\alpha$ and $\alpha+\omega_1$ fixed, as it must).  Note that each $x_n$ and $y_n$ correspond to $x_1$ and $y_1$ in one of the $k(n)$ copies of $\Lambda$ making up $\Sigma^{(k(n))}$.   So for each $n$,   an autohomeomorphism of $\Sigma^{(k(n))}$ which applies $h$ to the interval $(\alpha,\alpha+\omega_1) $ in each copy of $\Lambda$  in $\Sigma^{(k(n))}$, and leaves other points fixed, followed by the appropriate rotation, will take $x_n$ to $y_n$ and commute with the bonding maps.   The resulting
homeomorphism of $S(\Lambda,\vec{p})$ takes $\vec{x}$ to $\vec{y}$.

Thus the following are either orbits or proper subsets of an orbit:
\begin{enumerate}\item[(i)] For each $\alpha\in NG$, $O_\alpha=\{\vec{x}\in S(\Lambda,\vec{p}): x_1=\alpha\}$;
\item[(ii)] For each maximal connected interval $(\alpha,\alpha+\omega_1)$ in $\Sigma\setminus NG$, $P_\alpha=\{\vec{x}\in S(\Lambda,\vec{p}): x_1\in (\alpha,\alpha+\omega_1)\}$.
\end{enumerate}
As $|NG|=\kappa$,  $ S(\Lambda,\vec{p})$ has at most $\kappa$ many orbits.
\end{proof}

{\bf Conjecture.} The sets given in (i) and (ii) above are precisely the orbits of $S(\Lambda,\vec{p})$.

\section{Constructing $2^\kappa$-many Nonhomeomorphic Examples}

In this section, we show that there are $2^\kappa$-many nonhomeomorphic spaces $S(\Lambda,\vec{p})$ for fixed $\vec{p}$, as $\Lambda$ varies over ordered continua of weight (= least cardinality of a base) $\kappa$, where $\kappa$ is any uncountable cardinal.  This is the best possible, because there are only $2^\kappa$-many compact  Hausdorff spaces of weight $\kappa$ (this follows easily from the fact that they all embed homeomorphically into $[0,1]^\kappa$).

Recall that our examples from the previous section are based on closed long lines $\Lambda(\kappa)$ of length $\omega_1\cdot \omega^\kappa$ (ordinal arithmetic), which can be thought of as laying the standard long line end to end in a sequence of order type $\omega^\kappa$.   The main idea for getting different homeomorphism types is to stick in a {\bf reverse} long line in place of some of those copies of the standard long line.

Let $a_\beta$ denote the ordinal $\omega_1\cdot \omega^\beta$.   Let $A$ be a subset of $\kappa$.  For each $\beta$ in $A$, in place of the copy of the standard long line from $a_\beta$ to $a_\beta + \omega_1$, put the reverse long line there instead.    Call the resulting ordered continuum $\Lambda(\kappa, A)$.    We will show that if $A$ and $A'$ are different subsets of $\kappa$, then $S(\Lambda(\kappa,A), \vec{p})$ is not homeomorphic to $S(\Lambda(\kappa,A'), \vec{p})$.

First, we need the following mild generalization of Lemma 3.1 which has a nearly identical proof.  We'll use the following notation: if $[x,y]$ is an interval in a long line, and $A$ is any set of ordinals, then
$[x,y]_A$ denotes the interval $[x,y]$ as modified above, i.e., for every $\beta\in A$ with $a_\beta\in [x,y)$, we have put the reverse long line in place of the copy of the standard long line from $a_\beta$ to $a_\beta + \omega_1$.

\begin{lemma}\label{omegaalpha2}  Let $a_\alpha=\omega_1\cdot\omega^\alpha$, where $\alpha$ is some ordinal.   Then for any $x<a_\alpha$, and for any sets $A$ and $B$ of ordinals, the interval $[x,a_\alpha]_A$ cannot be homeomorphically embedded in $[0,y]_B$ for any $y<a_\alpha$.  \end{lemma}

  \begin{proof}  By induction on $\alpha$.  The cases where  $\alpha=0$ and where $\alpha$ is a limit ordinal are handled just as in Lemma~\ref{omegaalpha}.   So we suppose $\alpha>0$ is a successor ordinal, say $\alpha=\beta +1$, and the result holds for any $\gamma<\alpha$.

  As in the proof of Lemma~\ref{omegaalpha},    $[0,a_\alpha)$ is the same as countably many
  copies of $[0,a_\beta)$ laid end to end in order type $\omega$, and $a_\alpha =sup\{a_\beta\cdot n:n<\omega\}$.
Suppose  $h:[x,a_\alpha]_A\rightarrow [0,y]_B$ is a homeomorphic embedding,
  where $x,y<a_\alpha$. Let $n<\omega$ be least such that $a_\beta\cdot n\geq h(a_\alpha)$.  Then $h(a_\alpha)>a_\beta\cdot(n-1)$.  If $h(a_\alpha)\leq a_\beta\cdot(n-1)+\omega_1$, then
    some arc $[z,a_\alpha]$ is homeomorphic either to a subset of a reverse long line (if $n=2$), or to the standard long line, both of which are impossible.  Hence
    $h(a_\alpha)> a_\beta\cdot(n-1)+\omega_1$, so there is $k\in\omega$,
    $k>2$, such that
  $h([a_\beta\cdot (k-1), a_\beta\cdot k]_A)\subset [a_\beta\cdot(n-1)+\omega_1,a_\beta\cdot n)_B$.  Since $k>2$, no reverse long lines have been inserted in $[a_\beta\cdot (k-1), a_\beta\cdot k]$, so $[a_\beta\cdot (k-1), a_\beta\cdot k]_A=[a_\beta\cdot (k-1), a_\beta\cdot k]\cong [0,a_\beta]$.  Also $[a_\beta\cdot (n-1)+\omega_1, a_\beta\cdot n)_B
  =[a_\beta\cdot (n-1), a_\beta\cdot n)\cong [0,a_\beta)$.  Hence the induction hypothesis is contradicted.
  \end{proof}

\begin{theorem} Fix $\vec{p}\in \mathbb{N}^\omega$.   Let $\kappa$ be an infinite cardinal, $A\subset\kappa$, and let $\Lambda(\kappa, A)$ be as defined above.  Then $S(\Lambda(\kappa, A),\vec{p})$ is $\frac{1}{\kappa}$-homogeneous, and $A\neq A'$ implies $S(\Lambda(\kappa, A),\vec{p})\not\cong S(\Lambda(\kappa, A'),\vec{p})$.   Hence, for any uncountable cardinal $\kappa$, there are $2^\kappa$-many $\frac{1}{\kappa}$-homogeneous solenoids of the form $S(\Lambda,\vec{p})$ as $\Lambda$ varies over ordered continua of weight $\kappa$.
\end{theorem}

\begin{proof}   With Lemma~\ref{omegaalpha2} in hand, the proof that $S(\Lambda(\kappa, A),\vec{p})$ is $\frac{1}{\kappa}$-homogeneous is the same as before.   So let $A$ and $A'$ be two distinct subsets of $\kappa$, and suppose $h:S(\Lambda(\kappa, A),\vec{p})\rightarrow S(\Lambda(\kappa, A'),\vec{p})$ is a homeomorphism.

Without loss of generality, there exists $\alpha\in A\setminus A'$.  Let $\vec{x}\in S(\Lambda(\kappa, A),\vec{p})$ with $x_1=\omega_1\cdot \omega^\alpha$, and let
$h(\vec{x})=\vec{y}$.  Since $\alpha\in A$, a reverse long line immediately follows $x_1$.   Let $b$ be any point in that reverse long line, and consider the interval $(0,b)$.   By Corollary~\ref{cantorcor}, some arc $(\delta, y_1+\epsilon)$, where $\delta<y_1$ and $0<\epsilon\leq 1$, is homeomorphic to a subarc of $(0,b)$.  Let $f:(\delta, y_1+\epsilon)\rightarrow (0,b)$ be a homeomorphic embedding.

We claim that $f(y_1)=x_1$.   If not, then for some subarc $I$ of $(\delta, y_1+\epsilon)$ containing $y_1$, either $f(I)$ is entirely on the right of $x_1$, or $f(I)$ is entirely on the left of $x_1$ and bounded by some $z<x_1$.    Invoking Corollary~\ref{cantorcor} in the opposite direction, some
arc $J$ containing $x_1$ is homeomorphic to a subarc of $I$.   If $f(I)$ is on the right of $x_1$, then $J$ is homeomorphic to a subarc of a reverse long line, which is impossible.   On the other hand, if $f(I)$ is bounded to the left of $x_1$, then Lemma~\ref{omegaalpha2} is violated.

Thus $f(y_1)=x_1$. Now $(y_1,sup(I))$ is homeomorphic to the real interval  $(0,\epsilon)$, so $x_1$ must look like that to its left or right, which is false.  So $f$, and hence the homeomorphism $h$, do not exist.
  \end{proof}

The reason for restricting the last sentence of the above result to uncountable cardinals is that the long line of length $\omega_1\cdot\omega^\omega$ has weight $\omega_1$, not $\omega$.  However, the first author shows in \cite{Bo2} that there are
$2^\omega$-many $\frac{1}{\omega}$-homogeneous solenoidal continua of weight $\omega$ (of course, having weight $\omega$ implies that they are  metrizable).

\section{Shape of spaces with related linear ordering}
In this section we show that among ordered continua $\Lambda$ the shape of $S(\Lambda,\vec{p})$
depends only on the equivalence class of $\vec{p}$ for a relation similar to one used to classify the additive subgroups of $\mathbb{Q}$.
We obtain a theorem about certain quotients of ordered continua and then apply this theorem to analyze the associated Bruschlinsky group, which is naturally isomorphic to the \v{C}ech-Alexander-Spanier cohomology group with coefficients $\mathbb{Z}$ in dimension one  (see \cite{HuHT}).  In our specific situation the Bruschlinsky group of $S(\Lambda,\vec{p})$  is the group $\mathbb Q (\vec{p})$ of all rationals  of the form $m/p_{1}\cdots p_{n}$ where  $m \in \mathbb{Z}$ and $n \in \mathbb{N}$.
  It will follow that for each fixed $\Lambda$ there are exactly $\mathfrak{c}$-many shapes and exactly $\mathfrak{c}$-many different homeomorphism types.

An exposition of shape theory for the results in this section can be found in \cite{MS} by Marde\v{s}i\'{c} and Segal.  Their approach requires some preparation, but Section 2.4 of Chapter 1 of  \cite{MS} shows the equivalence with a simpler approach given in \cite{KozSegL} and \cite{KozSegV}.  Nevertheless \cite{MS} is a very useful reference for material related to this section.
For an exposition of homotopy and cohomology Spanier \cite{Sp} is a standard useful reference, and all definitions and notation not explicitly given are found there.

\textbf{Elementary shape theory}.  If $X$ and $Y$ are spaces, which will always be taken as compact Hausdorff, and $f \colon X \to Y$ is a map, then $[f]$ is the homotopy class of $f$  and $\pi_{X}(Y)=[X,Y]$ is the set of all homotopy classes of maps from $X$ to $Y$. Let $\mathcal{W}$ be the category whose objects are all spaces which are dominated by polyhedra (see \cite{MS}, \cite{Mil} for the flexibility in handling $\mathcal{W}$) and whose morphisms are homotopy classes of maps between the objects.   $\pi_{X}$ will here be considered a covariant functor defined on  $\mathcal{W}$ taking values in the category of sets and functions.    A \emph{shape map} from $X$ to $Y$ will be defined as a natural transformation $\Phi \colon \pi_{Y} \to \pi_{X}$.  Note the reversal!  (Marde\v{s}i\'{c} and Segal use the phrase \emph{shape morphism}.)
If $f \colon X \to Y$ is the homotopy class of a map from $X$ to $Y$, it induces a natural transformation $f^{\#} \colon \pi_{Y} \to \pi_{X}$ which is defined for any space $W$ in $\mathcal{W}$  by $f^{\#}(g)=gf$.
If $f$ is a map, it will be convenient to write $f^{\#}$ for $[f]^{\#}$.

A shape map $\Phi$ from $X$ to $Y$ is a \emph{shape equivalence} if it is an invertible natural transformation, and for this to occur it is necessary and sufficient that $\Phi \colon \pi_{Y} (W) \to \pi_{X}(W)$ is a bijection for every object $W$ in $\mathcal{W}$.  A map $f$ will be called a shape equivalence if and only if $f^{\#}$ is a shape equivalence.  A space $X$ has \emph{trivial shape} if the map of $X$ to a one point space is a shape equivalence.  Note that a space $X$ has trivial shape if and only if every map defined on $X$ with target an object of $\mathcal{W}$ is homotopic to a constant map. 
\medskip

\textbf{Definitions}.
\begin{enumerate}
\item  If $\Lambda$ and $\Lambda^{\prime}$ are ordered continua, then a map $\gamma \colon \Lambda \to \Lambda^{\prime}$ is \emph{compliant} 
if $\gamma$ maps the first point of $\Lambda$ to the first point of $\Lambda^{\prime}$ and  the last point of $\Lambda$ to the last point of $\Lambda^{\prime}$.  No condition is imposed on how $\gamma$ maps the other points of $\Lambda$.
\item If $\Sigma$ is obtained from $\Lambda$ by collapsing $\partial \Lambda$ to a point with quotient map $\tau$, a map $\phi \colon \Sigma \to \mathbb{S}^{1}$ is \emph{standard} 
if
there is a compliant map $\gamma \colon \Lambda \to \mathbb{I}$ and $\phi = \sigma \gamma \tau^{-1}$, where $\mathbb{S}^{1}$ will be treated as obtained from $\mathbb{I}$ by collapsing $\partial \mathbb{I}$ with quotient map $\sigma \colon \mathbb{I} \to \mathbb{S}^{1}$.\end{enumerate}
\textbf{Remark}.   Expressions like $\sigma \gamma \tau^{-1}$ will be used only if they are single-valued and therefore define  functions.

The analysis of constructions on these spaces is based on the following result.

\begin{lemma}   If $\Lambda$ is an ordered continuum, then $\Lambda$ has trivial shape. \end{lemma}

\begin{proof}  $\Lambda$ has a cofinal family  of covers $\mathcal{U}$ whose nerves $K(\mathcal{U})$, have the property that $ | K(\mathcal{U}) | $, the space of $K(\mathcal{U})$, is homeomorphic to $\mathbb{I}$.   Now let $\varphi \colon \Lambda \to W$ be a map into an arbitrary object $W$ of $\mathcal{W}$.  A standard argument (Cf. \cite{Sp}, \cite{HuHT}) gives maps $\theta \colon \Lambda \to  | K(\mathcal{U}) | $ and $\psi \colon  | K(\mathcal{U}) |  \to W$ such that $\varphi \simeq \psi \theta$, where $K$ is the nerve of an open cover which may be taken from a cofinal family and whose space $|K|$  therefore may be taken to be homeomorphic to $\mathbb{I}$.   Thus $\varphi$ is homotopic to a constant map showing that the shape of $\Lambda$ is trivial. \end{proof}

There are two major tools of shape theory which will be used: collapsing sets of trivial shape (\cite{MS}) and the continuity property (Cor. 4.8 of \cite{LeeRay}).  For convenience we give the arguments which apply for our results.

\begin{lemma} If $A$ is a closed set of trivial shape in  $X$, then the projection map $f \colon X \to X/A$ is a shape equivalence.  Furthermore, if $Y$ is the quotient of  $X$ obtained by identifying a finite number $n$ of disjoint compact subspaces of trivial shape to $n$ points, then the projection $f \colon X \to Y$ is a shape equivalence.\end{lemma}

\begin{proof}
Let $W  \in \mathcal{W}$.  If $h \colon X \to W$, then it may be assumed that $W$ is a compact polyhedron.  The map $h$ restricted to $A$ is homotopic to a constant map; the homotopy extension property implies that there is a homotopy $h_{t} \colon X \to W, t \in  \mathbb{I}$, such that $h_{0}=h$ and $h_{1}$ maps $A$ to a single point.  $h_{1}$ defines a map $g \colon X/A \to W$, and $gf \simeq h$.  This shows $f^{\#} \colon [X/A,W] \to [X]$ is surjective.

To show $f^{\#}$ is injective suppose $g_{0}, g_{1} \colon X/A \to W$, and suppose there is a homotopy $h_{t} \colon X \to W$ such that $h_{j}=g_{j}f$ for $j=0,1$.  Let $w_{j}$ be the point $g_{j}f(A)$  for $j=0,1$, and consider the space $F$ of all maps $( \mathbb{I},0,1 ) \to ( W, w_{0}, w_{1})$ which is an ANR \cite{HuTR}.  The homotopy defines a map of $A$ into $F$, which is homotopic to a constant which is a map of $A \times \mathbb{I}  \to W$ such that $A \times \{t\}$ is mapped to a point for all $t$.  Altogether these homotopies define a map of $A \times  \mathbb{I} \times \mathbb{I}$ into $W$ with the properties that $(x,0,t) \mapsto h_{t}(x)$, $A \times \{(j,t)\} \mapsto w_{j}$, and $A \times \{(t,1)\} \mapsto$ a point.  Use this along with the assignments $(x,t,0) \mapsto h_t(x)$ and $(x,j,t) \mapsto g_jf(x)$  (where $x \in X$, $t \in \mathbb{I}$, and $j=0,1$)   to define a map
\[ X \times \mathbb{I} \times \{0\} \cup X \times \{0,1\} \times \mathbb{I} \cup A \times \mathbb{I} \times \mathbb{I} \to W \] The homotopy extension property gives an extension $G \colon X \times \mathbb{I} \times \mathbb{I} \to W$.  The map $(x,t) \mapsto G(x,t,1)$ maps $A \times \{t\}$ to a point for each $t$, which defines a homotopy $g_{0} \simeq g_{1}$ by passing to the quotient.

The second assertion follows by induction.
\end{proof}

Direct limits of sets and functions are discussed in \cite{Sp}.
\begin{lemma} Let $X_{0} \xleftarrow{f_{1}} X_{1}  \xleftarrow{f_{2}} X_{2} \cdots $ be an inverse sequence of compact Hausdorff spaces with inverse limit $X$ and maps $q_{n} \colon X \to X_{n}$.  Then for any $W \in \mathcal{W}$ the maps $q_{n}^{\#} \colon [X_{n}, W] \to [X,W]$ define a bijection  from the direct limit of the sequence $[X_{0}, W]  \xrightarrow{f_{1}^{\#}} [X_{1}, W]  \xrightarrow{f_{2}^{\#}} [X_{2}, W]  \cdots $ to $[X,W]$.
\end{lemma}

\begin{proof}  For each $n$ let $Y_{n}$ be the disjoint union of $X_{0}$, \dots, $X_{n}$ and let $g_{n} \colon Y_{n} \to Y_{n-1}$ map $X_{k}$ identically to itself for $0 \le k < n$ and map $X_{n}$ to $X_{n-1}$ via $f_{n}$.  The inverse limit $Y$ of this inverse sequence contains $X$ with the $X_{n}$ limiting down to $X$.  $W$ may be assumed to be a compact polyhedron, which may be considered a retract of an open subset of the Hilbert cube.  Thus, if $g \colon X \to W$ is given, there is an extension of $g$ over a neighborhood of $X$ in $Y$, and this implies there is $n_{0}$ such that for each $n \ge n_{0}$ there is a map $g_{n} \colon X_{n} \to W$ such that the maps $g_{n} q_{n}$ converge uniformly to $g$.  Because sufficiently close maps into $W$ are homotopic, there is $n_{1}$ such that for all $n \ge n_{1}$ there is a homotopy $g_{n} q_{n} \simeq g$.  It is, of course, sufficient for surjectivity that there is one such index.

To obtain injectivity consider the inverse sequence obtained by replacing each $X_{n}$ with $X_{n} \times \mathbb{I}$.  Thus if $g_{0}, g_{1} \colon X_{m} \to W$ are maps such that $g_{0} q_{m} \simeq g_{1} q_{m}$, assume for simplicity that $m=0$ and define a map $X \times \mathbb{I} \cup Y \times \{0, 1 \} \to W$ by the homotopy on $X \times \mathbb{I}$ and by mapping $(x, j) \in X_{n} \times \{ 0, 1 \}$ to $g_{j} f_{1} \cdots f_{n}(x)$.  By an argument similar to the above it follows that there is $n_{0}$ such that for each $n \ge n_{0}$ there is a homotopy $G_{n, \, t} \colon X_{n} \to W$, $t \in \mathbb{I}$, such that $G_{n, \, j}=g_{j} f_{1} \cdots f_{n}$ for $j=0,1$.  As above one such index suffices to show injectivity.
\end{proof}

\begin{lemma}\label{standard}  If $\Sigma$ be obtained from $\Lambda$ by collapsing $\partial \Lambda$, then there exist standard maps $\phi \colon \Sigma \to \mathbb{S}^{1}$ and if $\phi$ and $\phi^{\prime}$ are standard maps, then $\phi \simeq \phi^{\prime}$.    Furthermore, every standard map is a shape equivalence.\end{lemma}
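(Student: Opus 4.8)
The plan is to treat the two assertions separately, reducing both to elementary facts about the convex target $\mathbb{I}$ together with the universal property of the quotient map $\pi$. Throughout I would write $\partial\Lambda=\{a,b\}$ with $a<b$ (these exist and are distinct since $\Lambda$ is a nondegenerate ordered continuum), and let $\infty=\pi(\{a,b\})$ denote the collapsed point of $\Sigma$; recall that $\pi$ identifies $a$ with $b$ and is injective elsewhere, while $\sigma$ identifies $0$ with $1$ and is injective elsewhere.

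For existence, first I would produce a compliant map $\theta\colon\Lambda\to\mathbb{I}$. Since $\Lambda$ is compact Hausdorff it is normal, and $\{a\},\{b\}$ are disjoint closed sets, so Urysohn's lemma yields a continuous $\theta\colon\Lambda\to\mathbb{I}$ with $\theta(a)=0$ and $\theta(b)=1$; this is exactly compliance (no monotonicity or surjectivity is needed, only the values at the endpoints). The composite $\sigma\theta\colon\Lambda\to\mathbb{S}^{1}$ then sends both $a$ and $b$ to the single point $\sigma(0)=\sigma(1)$, hence is constant on every fibre of $\pi$ (the only nontrivial fibre being $\{a,b\}$). As $\pi$ is a quotient map, $\sigma\theta$ factors uniquely through a continuous map $\phi\colon\Sigma\to\mathbb{S}^{1}$ with $\phi\pi=\sigma\theta$; this is precisely a standard map $\phi=\sigma\theta\pi^{-1}$, which is single-valued by the computation just made and therefore satisfies the convention of the Remark.

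For the homotopy statement, let $\phi=\sigma\theta\pi^{-1}$ and $\phi'=\sigma\theta'\pi^{-1}$ be standard, coming from compliant maps $\theta,\theta'$. I would use the straight-line homotopy in the convex set $\mathbb{I}$: set $\theta_t=(1-t)\theta+t\theta'$ for $t\in\mathbb{I}$, which is continuous in $(x,t)$ and takes values in $\mathbb{I}$. Because $\theta_t(a)=0$ and $\theta_t(b)=1$ for every $t$, each $\theta_t$ is compliant, so the map $\widetilde H\colon\Lambda\times\mathbb{I}\to\mathbb{S}^{1}$ given by $\widetilde H(x,t)=\sigma(\theta_t(x))$ is continuous and, for each fixed $t$, constant on $\{a,b\}$. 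Consequently $\widetilde H$ is constant on the fibres of $\pi\times\mathrm{id}_{\mathbb{I}}$.

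The one point requiring care is that this descends continuously to $\Sigma\times\mathbb{I}$. I would argue that $\pi\times\mathrm{id}_{\mathbb{I}}\colon\Lambda\times\mathbb{I}\to\Sigma\times\mathbb{I}$ is a quotient map: its domain is compact (a product of compacta), its codomain is Hausdorff (a product of compact Hausdorff spaces, $\Sigma$ being the Hausdorff quotient of $\Lambda$ by the closed set $\partial\Lambda$), and a continuous surjection from a compact space onto a Hausdorff space is closed, hence a quotient map. Thus $\widetilde H$ factors through a continuous $H\colon\Sigma\times\mathbb{I}\to\mathbb{S}^{1}$ with $H(\pi(x),t)=\sigma\theta_t(x)$; evaluating at $t=0,1$ gives $H(\cdot,0)=\phi$ and $H(\cdot,1)=\phi'$, so $H$ is the desired homotopy $\phi\simeq\phi'$. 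The main (and essentially only) obstacle is exactly this descent step, i.e. confirming that $\pi\times\mathrm{id}_{\mathbb{I}}$ is a quotient map so that joint continuity of the homotopy on $\Sigma\times\mathbb{I}$ is guaranteed; everything else follows from the convexity of $\mathbb{I}$ and the universal property of $\pi$.
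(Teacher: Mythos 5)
Your proposal is correct and follows essentially the same route as the paper: Urysohn's Lemma produces the compliant map $\theta$, and any two such maps are homotopic rel $\partial\Lambda$ inside $\mathbb{I}$ (the paper cites Tietze's Theorem where you use the explicit straight-line homotopy, but the content is the same), after which the homotopy descends through the quotient. Your extra care in verifying that $\pi\times\mathrm{id}_{\mathbb{I}}$ is a quotient map (closed surjection from a compactum onto a Hausdorff space) simply fills in a step the paper leaves implicit.
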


\begin{proof}  Let $\tau \colon \Lambda \to \Sigma$ be the quotient map.  By Urysohn's Lemma there is a map $\gamma \colon \Lambda \to \mathbb{I}$ which maps the first point of $\Lambda$ to 0 and the last point of $\Lambda$ to 1 so that $\phi = \sigma  \gamma \tau^{-1}$ is a standard map.  If $\phi$ and $\phi^{\prime}$ are defined by $\gamma$ and $\gamma^{\prime}$ respectively, then by Tietze's Theorem the maps $\gamma$ and $\gamma^{\prime}$ are homotopic rel $\partial \Lambda$, which induces a homotopy $\phi \simeq \phi^{\prime}$.

Now employ $\mathbb{I} \vee \Lambda$ from which $\Sigma_{\mathbb{I}}$ arises by identification of the points of $\partial ( \mathbb{I} \vee \Lambda)$.  The compliant maps $\beta \colon  \mathbb{I} \vee \Lambda \to  \mathbb{I}$  and $\alpha \colon  \mathbb{I} \vee \Lambda \to  \Lambda$ which respectively collapse $\Lambda$ and $\mathbb{I}$ to points induce $g \colon  \Sigma_{\mathbb{I}} \to \mathbb{S}^{1}$ and  $f \colon \Sigma_{\mathbb{I}} \to \Sigma$.  These maps collapse sets of trivial shape to a point and hence are shape equivalences.  Since $\phi f$ and $g$ are standard maps, they are homotopic.  Hence for every object $W$ of $\mathcal{W}$,
\[f^\# \phi^\# = (f \phi)^\#=g^\# \!,\]
and since $f^\#$ and $g^\#$ are bijections, $\phi^\#$ is a bijection.  Hence $\phi$ is a shape equivalence.
\end{proof}

\begin{theorem}\label{multiplyn} Assume that
\begin{enumerate}[(a)]
\item $\Sigma$ and  $\Sigma^{\prime}$ are obtained from ordered continua  $\Lambda$ and $\Lambda^{\prime}$ by collapsing $\partial \Lambda$ and $\partial \Lambda^{\prime}$ respectively;
\item $a_{0}< a_{1}< \cdots < a_{n}$ are points of $\Lambda$ with $a_{0}$ the first point of $\Lambda$ and $a_{n}$ the last point of $\Lambda$;
 \item For $j=1, \dots, n$ there are compliant maps $\gamma_{j} \colon  \Lambda_{j} \to \Lambda^{\prime}$,
 where $\Lambda_{j}=[a_{j-1}, \, a_{j}]$;
\item $f \colon \Sigma \to \Sigma^{\prime}$ is defined by the equations $f(\tau(x))=\tau^{\prime}(\gamma_{j}(x))$ for all  $x \in \Lambda_{j}$  for $j=1, \dots, n$, where $\tau \colon \Lambda \to \Sigma$ and $\tau^{\prime} \colon \Lambda^{\prime} \to \Sigma^{\prime}$ are the respective quotient maps.
\end{enumerate}
Then for any standard maps $\phi \colon \Sigma \to \mathbb{S}^{1}$ and $\phi^{\prime} \colon \Sigma^{\prime} \to \mathbb{S}^{1}$ the maps $\phi^{\prime} f$ and $\mu_{n} \phi$ are homotopic, where the map $\mu_{n} \colon \mathbb{S}^{1} \to \mathbb{S}^{1}$ is the $n$th-power map $z \mapsto z^{n}$.
\end{theorem}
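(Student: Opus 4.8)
The plan is to compute $f^{*}$ by writing it as a sum of $n$ "degree one" contributions, using Lemma~\ref{fstar} as the engine and the homotopy uniqueness of standard maps (Lemma~\ref{standard}) to identify each contribution. Throughout, the vertical isomorphisms $\phi^{*}$ and $\phi'^{*}$ are supplied by Lemma~\ref{collapse}, so commutativity of the displayed diagram means exactly $f^{*}\circ\phi'^{*}=n\,\phi^{*}$ as maps $H^{1}(\mathbb{S}^{1})\to H^{1}(\Sigma)$, and that is what I aim to prove.

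First I would set up the hypotheses of Lemma~\ref{fstar} with $X=\Sigma$ and $Y=\Sigma'$. For $j=1,\dots,n$ put $U_{j}=\pi((a_{j-1},a_{j}))$; since $\Lambda$ is a nondegenerate connected LOTS these open intervals are nonempty, and $\pi$ is a homeomorphism on each of them, so the $U_{j}$ are nonempty pairwise disjoint open arcs of $\Sigma$. Writing $U=\bigcup_{j}U_{j}$, the complement $\Sigma\setminus U$ is the finite set $\{\pi(a_{0}),\dots,\pi(a_{n})\}$. The crucial point is that compliance of each $\gamma_{j}$ forces $\gamma_{j}(a_{j-1})$ and $\gamma_{j}(a_{j})$ to be endpoints of $\Lambda'$, so by the defining equations $f$ sends every $\pi(a_{i})$ to the single base point $B=\pi'(\partial\Lambda')$ of $\Sigma'$. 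Thus $f$ collapses $\Sigma\setminus U$ to $B$, and the auxiliary maps $f_{j}$ of Lemma~\ref{fstar} (each agreeing with $f$ on $U_{j}$ and sending everything else to $B$) are exactly the maps that wrap the $j$-th arc once around $\Sigma'$ via $\gamma_{j}$ and crush the rest. Lemma~\ref{fstar} then gives $f^{*}=f_{1}^{*}+\cdots+f_{n}^{*}$ on $H^{1}(\Sigma')$.

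The heart of the argument is to show that each $f_{j}^{*}$ is multiplication by $1$, i.e. $f_{j}^{*}\circ\phi'^{*}=\phi^{*}$, equivalently $(\phi'\circ f_{j})^{*}=\phi^{*}$. I would prove this by showing that $\phi'\circ f_{j}\colon\Sigma\to\mathbb{S}^{1}$ is itself a \emph{standard} map on $\Sigma$; then Lemma~\ref{standard} gives $\phi'\circ f_{j}\simeq\phi$ and hence equality on $H^{1}$. To exhibit it as standard, write $\phi'=\sigma\theta'\pi'^{-1}$ for a compliant $\theta'\colon\Lambda'\to\mathbb{I}$ and define $\eta_{j}\colon\Lambda\to\mathbb{I}$ by $\eta_{j}=\theta'\circ\gamma_{j}$ on $[a_{j-1},a_{j}]$, $\eta_{j}\equiv 0$ on $[a_{0},a_{j-1}]$, and $\eta_{j}\equiv 1$ on $[a_{j},a_{n}]$. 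Compliance of $\gamma_{j}$ and $\theta'$ makes $\eta_{j}$ continuous at the junction points $a_{j-1}$ and $a_{j}$ (both one-sided values agree at $0$ and at $1$ respectively) and compliant on all of $\Lambda$, so $\sigma\eta_{j}\pi^{-1}$ is single-valued and is a standard map by definition. Checking on each of the three pieces of $\Lambda$ that $\phi'\circ f_{j}$ and $\sigma\eta_{j}\pi^{-1}$ agree then finishes this step.

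Finally I would assemble the pieces: precomposing $f^{*}=\sum_{j}f_{j}^{*}$ with $\phi'^{*}$ and using $f_{j}^{*}\circ\phi'^{*}=\phi^{*}$ for each $j$ gives $f^{*}\circ\phi'^{*}=\sum_{j=1}^{n}\phi^{*}=n\,\phi^{*}$, which is precisely commutativity of the stated diagram. I expect the main obstacle to be the bookkeeping in the third step: correctly recognizing $f_{j}$ as the map with all arcs but the $j$-th collapsed, and verifying piecewise that $\phi'\circ f_{j}$ equals the standard map $\sigma\eta_{j}\pi^{-1}$, since it is exactly the two compliance hypotheses that guarantee both the continuity of $\eta_{j}$ and the well-definedness of the quotient expression.
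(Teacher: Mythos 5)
Your proposal is correct and follows essentially the same route as the paper's proof: both decompose $f^{*}=f_{1}^{*}+\cdots+f_{n}^{*}$ via Lemma~\ref{fstar} and then identify each $f_{j}^{*}\phi^{\prime\,*}$ with $\phi^{*}$ by recognizing $\phi^{\prime}\circ f_{j}$ as a standard map and invoking Lemma~\ref{standard}. Your compliant map $\eta_{j}$ is exactly the paper's $\theta_{j}=\theta^{\prime}\widetilde{\gamma}_{j}$, just written directly into $\mathbb{I}$ rather than factored through an extension $\widetilde{\gamma}_{j}\colon\Lambda\to\Lambda^{\prime}$.
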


\begin{proof}
Let $\Sigma^{\prime}_{\mathbb{I}}$ be obtained from $\mathbb{I} \vee \Lambda^{\prime}$ by the identifying the points of $\partial ( \mathbb{I} \vee \Lambda^{\prime})$.  For $j=1, \dots, n$ let  $\Lambda_{j} = \mathbb{I} \vee [a_{j-1}, \, a_{j}]$.  Let $\Lambda_{\mathbb{I}}$ be these copies laid end to end with the last point of $\Lambda_{ j-1}$ identified with the first point of $\Lambda_{ j}$ for $j=2, \dots, n$.  Finally, $\Sigma_{\mathbb{I}}$ is obtained by identifying the points of $\partial \Lambda_{\mathbb{I}}$.  For $j=1, \dots, n$ there are compliant maps $\overline{\gamma_{j}} \colon  \Lambda_{j} \to \mathbb{I} \vee \Lambda^{\prime}$ defined by the identity of $\mathbb{I}$ and $\gamma_{j}$.  The map $f_{\mathbb{I}} \colon \Sigma_{\mathbb{I}} \to \Sigma^{\prime}_{\mathbb{I}}$ is defined by the equations $f(\overline{\tau}(x))=\overline{\tau}^{\prime}(\overline{\gamma_{j}}(x))$ for all  $x \in \Lambda_{j}$  for $j=1, \dots, n$, where $\overline{\tau} \colon \Lambda_{\mathbb{I}} \to \Sigma_{\mathbb{I}} $ and $\overline{\tau}^{\prime} \colon \mathbb{I} \vee \Lambda^{\prime}  \to \Sigma^{\prime}_{\mathbb{I}} $ are the respective quotient maps.

The diagram below is homotopy commutative.
\[ \begin{CD}
\mathbb{S}^{1} @<{\psi}<< \Sigma_{\mathbb{I}} @>{q}>> \Sigma @>{\phi}>> \mathbb{S}^{1} \\
@V{\mu_{n}}VV @V{f_{\mathbb{I}}}VV @V{f}VV  \\
\mathbb{S}^{1} @<{\psi^{\prime}}<< \Sigma_{\mathbb{I}}^{\prime} @>{q^{\prime}}>>  \Sigma^{\prime} @>{\phi^{\prime}}>>\mathbb{S}^{1}
 \end{CD}  \]
The maps $q$ and $q^{\prime}$ collapse a finite number of sets of trivial shape to points and hence are shape equivalences.
Since $\psi$ and $\phi q$ are standard maps, $\psi \simeq \phi q$ and therefore $\psi^{\#}$ and $q^{\#} \phi^{\#}$ are inverses and similarly for $\psi^{\prime \#}$ and $q^{\prime \#} \phi^{\prime \#}$.
This gives rise to the commutative diagram (with $\Phi$ defined by commutativity), and the  compositions in the horizontal rows are the identity.
\[ \begin{CD}
 [\mathbb{S}^{1}, \mathbb{S}^{1}] @>{\psi^{\#}}>{\approx}> [\Sigma_{\mathbb{I}}, \mathbb{S}^{1}] @<{q^{\#}}<{\approx}< [\Sigma, \mathbb{S}^{1}] @<{\phi^{\#}}<{\approx}< [\mathbb{S}^{1}, \mathbb{S}^{1}] \\
@A{\mu_{n}^{\#}}AA @A{f_{\mathbb{I}}^{\#}}AA @A{f^{\#}}AA @A{\Phi}AA  \\
[\mathbb{S}^{1}, \mathbb{S}^{1}] @>{\psi^{\prime \#}}>{\approx}> [\Sigma_{\mathbb{I}}^{\prime}, \mathbb{S}^{1}] @<{q^{\prime  \#}}<{\approx}<  [\Sigma^{\prime}, \mathbb{S}^{1}] @<{\phi^{\prime  \#}}<{\approx}< [\mathbb{S}^{1}, \mathbb{S}^{1}]
 \end{CD}  \]
Therefore $\Phi=\mu_{n}^{\#}$, and consequently,
\[ [ \phi^{\prime} f] = (\phi^{\prime} f)^{\#}(\mathbf{1}_{S^{1}})
 =f^{\#} \phi^{\prime  \#}(\mathbf{1}_{S^{1}})
 = \phi^{\#} \mu_{n}^{\#} (\mathbf{1}_{S^{1}})
 = (\mu_{n}  \phi)^{\#} (\mathbf{1}_{S^{1}})
 = [\mu_{n}  \phi]. \]
This concludes the proof. \end{proof}

Next we apply this theorem to our specific situation.  As before,
let $\Lambda$ be an ordered continuum and let $\Lambda^{(n)}$ be $n$ copies of $\Lambda$ laid end to end as $\Lambda_{1} \vee \Lambda_{2} \vee \cdots \vee \Lambda_{n}$ with the last point of $\Lambda_{j}$ identified with the first point of $\Lambda_{j+1}$ for $j=1, \dots, n-1$.  Let $\Sigma$ and $\Sigma^{(n)}$ be the corresponding quotient spaces obtained by identifying first and last points of $\Lambda$ and $\Lambda^{(n)}$ with respective quotient maps $\tau$ and $\tau^{(n)}$.  Let $f \colon \Sigma^{(n)} \to \Sigma$ be the map which maps each image of $\Lambda_{j}$ in $\Sigma^{(n)}$ in the order preserving way to $\Lambda$ and passing to the quotient $\Sigma$: precisely, for $j=1, \dots, n$ there is a commutative diagram.
\[ \begin{CD}
 \Lambda  @<{\approx}<< \Lambda_{j}\\
@V{\tau}VV  @VV{\tau^{(n)} \mid \Lambda_{j} }V \\
 \Sigma @<{f}<< \Sigma^{(n)}\\
\end{CD} \]
By Theorem~\ref{multiplyn}  for all compliant maps $\gamma \colon \Lambda \to \mathbb{I}$ and $\delta \colon \Lambda^{(n)} \to \mathbb{I}$  the resulting standard maps $\phi$ and $\psi$ are shape equivalences and the following diagram is homotopy commutative, where $\mu_{n}$ is the $n$th-power function $z \mapsto z^{n}$.
\[ \begin{CD}
\Sigma @<{f}<<  \Sigma^{(n)} \\
@V{\phi}VV   @V{\psi}VV\\
\mathbb{S}^{1} @<{\mu_{n}}<<    \mathbb{S}^{1}\\
\end{CD} \]
Now suppose $\vec{p}$ is a sequence $p_{1}, \, p_{2}, \, \dots$ of integers greater than 1.  For convenience put $p_{0}=1$ and for each $n \in \mathbb{N}$ let $\Lambda_{n}$ be $\Lambda^{(p_{0}p_{1}\cdots p_{n})}$ and let $\Sigma_{n}$ be $\Sigma^{(p_{0}p_{1}\cdots p_{n})}$.   The construction above produces a map $f_{n} \colon \Sigma_{n} \to \Sigma_{n-1}$ for each positive integer $n$ and thus produces an inverse sequence with the respective inverse limits $S(\Lambda,\vec{p})$ and $S(\mathbb{I},\vec{p})$.  Note that $S(\mathbb{I},\vec{p})$ is homeomorphic to the standard metric solenoid.  Furthermore, there are standard maps $\phi_{n} \colon \Sigma_{n} \to \mathbb{S}^{1}$ and the diagram
\[ \begin{CD}
 \Sigma_{0} @<{f_{1}}<<   \Sigma_{1} @<{f_{2}}<<   \Sigma_{2} @<{f_{3}}<< \cdots  S(\Lambda,\vec{p}) \\
 @V{\phi_{0}}VV @V{\phi_{1}}VV @V{\phi_{2}}VV \\
\mathbb{S}^{1} @<{\mu_{p_{1}}}<< \mathbb{S}^{1} @<{\mu_{p_{2}}}<< \mathbb{S}^{1} @<{\mu_{p_{3}}}<< \cdots S( \mathbb{I}, \vec{p})
\end{CD} \]
is homotopy commutative.  Since the ladder is only homotopy commutative, no map is given between the inverse limits.

\begin{corollary}\label{bru} Let $\vec{p}=\langle p_1,p_2,\dots\rangle$ be a sequence of integers greater than 1.  For every  ordered continuum $\Lambda$ the shape of $S(\Lambda,\vec{p})$ is the same as the shape of $S(\mathbb{I},\vec{p})$.  Furthermore, the Bruschlinsky group $[S(\Lambda,\vec{p}), \mathbb{S}^{1}]$ is isomorphic to $\mathbb Q(\vec{p})$.  \end{corollary}
\begin{proof}
Every $\phi_{j}$ is a shape equivalence.  Hence for any object $W$ of $\mathcal{W}$ there results a commutative ladder of sets of homotopy classes.
\[ \begin{CD}
 [\Sigma_{0}, W] @>{f_{1}^{\#}}>>   [\Sigma_{1}, W] @>{f_{2}^{\#}}>>   [\Sigma_{2}, W] @>{f_{3}^{\#}}>> \cdots [S(\Lambda,\vec{p}), W]  \\
 @A{\phi_{0}^{\#}}A{\approx}A @A{\phi_{1}^{\#}}A{\approx}A @A{\phi_{2}^{\#}}A{\approx}A @A{\Phi}AA\\
[\mathbb{S}^{1}, W] @>{\mu_{p_{1}}^{\#}}>> [\mathbb{S}^{1}, W] @>{\mu_{p_{2}}^{\#}}>> [\mathbb{S}^{1}, W] @>{\mu_{p_{3}}^{\#}}>> \cdots  [S( \mathbb{I}, \vec{p}), W]
\end{CD} \]
The right hand ends of these diagrams are the direct limit of the sequences they end.  This gives the definition of $\Phi$ and the fact that it is a  bijection is a consequence of the fact that all the $\phi_{j}^{\#}$ are bijections.
The group $[\mathbb{S}^{1}, \mathbb{S}^{1}]$ is isomorphic with $\mathbb{Z}$.  Using one such isomorphism the direct sequence obtained by setting $W=\mathbb{S}^{1}$ in the ladder above becomes a sequence of groups equal to $\mathbb{Z}$ with maps equal to the appropriate multiplication.  The direct limit of the sequence is $\mathbb{Q}(\vec{p})$ which is therefore isomorphic to
 $[ S(\Lambda,\vec{p}), \mathbb{S}^{1}]$.
\end{proof}
Keesling \cite{Kees} and Eberhart, Gordh, and Mack \cite{EberGorMa} have shown that every circle-like continuum has the shape of an abelian compact topological group.  Their results use a result of Scheffer \cite{Schef}, which uses deep properties of topological groups.  To complete the identification of the shape of $S(\Lambda,\vec{p})$  using these results the Bruschlinsky group must be calculated, and in doing this the proof of Corollary \ref{bru} makes the extra step of determining the shape while maintaining the elementary character of the arguments of this section.

The spaces $S(\Lambda, \vec{p})$ have been constructed from sequences $\vec{p}$ integers greater than 1.  Such sequences
can be compared by an equivalence relation which is suggested by the classification of (additive) subgroups of $\mathbb{Q}$.  The material from Group Theory which follows is based on the section ``Subgroups of $\mathbb{Q}$'' of Chapter 10 of \cite{Rt}.

Define the \emph{characteristic} of a sequence $\vec{p}$ to be the sequence $h=h(\vec{p})$ whose terms are indexed by the prime numbers in their natural  order and consist of nonnegative integers or the symbol $\infty$: $h_\pi$ is the total number times (possibly $\infty$) the prime number $\pi$ appears in the factorization of all the numbers $p_n$ of $\vec{p}$.  Sequences $\vec{p}$ and $\vec{q}$ are \emph{equivalent}, if their characteristics $h=h(\vec{p})$ and $k=h(\vec{q})$ satisfy the following conditions
\begin{enumerate}
\item $h_\pi=\infty$ if and only $k_\pi=\infty$, and
\item the set of all primes $\pi$ with $h_\pi \ne \infty$ and the set of all primes $\pi$ with $k_\pi \ne \infty$ differ by no more than a finite number of primes.
\end{enumerate}
This notion of equivalence corresponds to subgroups of $\mathbb{Q}$ having the same \emph{type} as defined in \cite{Rt}, and it is shown there that $\mathbb{Q}(\vec{p})$ and $\mathbb{Q}(\vec{q})$ are isomorphic if and only if $\vec{p}$ and $\vec{q}$ are equivalent.  In fact Rotman attributes the  concepts and results from abelian group theory which apply here to the 1914 dissertation of F. W. Levi.  In \cite{Mc} McCord codified an equivalence relation for sequences of prime numbers, a concept considered earlier by Bing \cite{Bi60}.  That concept leads to the same subgroups of $\mathbb{Q}$ as the one used in this paper.

The following known result \cite{Mc} is stated separately for convenience and because
 of the elementary character of the proof, which depends only on the material above and the translation to the present context of von Neumann's description of the character group of $\mathbb{Q}$, \cite{jvN} p.~477.
To fix the notation let $\Sigma(\vec{p})$ be the compact topological group which is the inverse limit of
$\mathbb{T} \xleftarrow{f_1}  \mathbb{T} \xleftarrow{f_2}  \mathbb{T} \xleftarrow{f_3} \cdots$,
where  $\mathbb{T}$ is the topological group of all complex numbers of modulus 1 under multiplication and $f_n \colon  \mathbb{T} \to  \mathbb{T}$ is defined by $f_n(z)=z^{p_n}$.

\begin{lemma}\label{vN}  Let $\vec{p}$ and $\vec{q}$ be infinite sequences of integers greater than 1.  The topological groups $\Sigma(\vec{p})$ and $\Sigma(\vec{q})$ are topologically isomorphic if and only $\vec{p}$ and $\vec{q}$ are equivalent.
\end{lemma}
\begin{proof}   If $\vec{p}$ and $\vec{q}$ are equivalent, it follows  from basic properties of character groups found in e.g.~\cite{H-R} that the isomorphism of $\mathbb{Q}(\vec{p})$ and $\mathbb{Q}(\vec{q})$ induces a topological isomorphism of their character groups.  The character group of $\mathbb{Q}(\vec{p})$ is seen to be $\Sigma(\vec{p})$ by the following observation: to each character $\phi$ of $\mathbb{Q}(\vec{p})$ associate the sequence $\vec{z}$ of terms $z_n$:
\[ z_{n} = \phi \left ( \frac{1}{p_{0} \cdots p_{n}} \right ) \in \mathbb{T}, \quad z_n=\phi \left ( \frac{p_{n+1}}{p_{0} \cdots p_{n+1}} \right )= z_{n+1}^{p_{n+1}}.\]
The sequence $\vec{z}$ is an element of $\Sigma(\vec{p})$, and the assignment $\phi \mapsto \vec{z}$ is a topological isomorphism (cf.~section 25.5 of \cite{H-R}).

Conversely, assume $\Sigma(\vec{p})$ and $\Sigma(\vec{q})$ are topologically isomorphic.  A direct argument along the lines of the proof for Corollary~\ref{bru}
 shows that
the Bruschlinsky groups $[ \Sigma(\vec{p}), \mathbb{S}^1]=\mathbb{Q}(\vec{p})$ and $[ \Sigma( \vec{q}), \mathbb{S}^1]= \mathbb{Q}(\vec{q})$ are isomorphic
and therefore $\vec{p}$ and $\vec{q}$ are equivalent.
\end{proof}

\begin{corollary}\label{SameShape}  Let $\Lambda$ and $\Lambda'$ be ordered continua and $\vec{p}$ and $\vec{q}$ sequences  of integers greater than one.  Then $S(\Lambda, \vec{p})$
and $S(\Lambda', \vec{q})$ have the same shape if and only if $\vec{p}$ and
$\vec{q}$ are equivalent.
\end{corollary}

\begin{proof}
If $\vec{p}$ and $\vec{q}$ are equivalent, Lemma~\ref{vN} implies that $\Sigma(\vec{p})$ and $\Sigma(\vec{q})$ are homeomorphic.  Since $S(\mathbb{I}, \vec{p})$ is homeomorphic to $\Sigma(\vec{p})$, the previous corollary implies that the shape of $S(\Lambda, \vec{p})$ is the same as the shape of $S(\Lambda', \vec{q})$.

Conversely, if the shape of $S(\Lambda, \vec{p})$ is the same as the shape of $S(\Lambda', \vec{q})$, then the Bruschlinsky groups $[ S(\Lambda, \vec{p}), \mathbb{S}^1]=\mathbb{Q}(\vec{p})$ and $[ S(\Lambda', \vec{q}), \mathbb{S}^1]= \mathbb{Q}(\vec{q})$ are isomorphic
and therefore $\vec{p}$ and $\vec{q}$ are equivalent.
\end{proof}
\begin{corollary} For each ordered continuum $\Lambda$ as $\vec{p}$ varies, there are exactly $\mathfrak{c}$ many shapes and exactly $\mathfrak{c}$ many homeomorphism types.
\end{corollary}
\begin{proof}  The cardinality of the set of equivalence classes of sequences $\vec{p}$ of integers greater than 1 is $\mathfrak{c}$.  Selecting one space $S(\Lambda, \vec{p})$ as a representative from each equivalence class of sequences gives a set of homeomorphism types with cardinality $\mathfrak{c}$.  Since the cardinality of the set of all homeomorphism types of the spaces $S(\Lambda, \vec{p})$ does not exceed the cardinality $\mathfrak{c}$ of the set of all such sequences, the result follows.
\end{proof}

\section{Acknowledgments}
Jan Boro\'nski's work was partially supported by the NPU II project LQ1602 ``IT4Innovations excellence in science'' provided by the M\v{S}MT. The author also gratefully acknowledges the partial support from the MSK DT1 Support of Science and Research in the Moravian-Silesian Region 2014 (RRC/07/2014).

The authors would like to thank the referees for valuable comments which improved the presentation of the paper.  

\bibliographystyle{amsplain}

\end{document}